\title{Dynamical aspects of foliations with ample normal bundle}
\author{Masanori Adachi}
\address[M. Adachi]{Department of Mathematics, Faculty of Science, Shizuoka University.  836 Ohya, Suruga-ku, Shizuoka 422-8529, Japan.}
\email{adachi.masanori@shizuoka.ac.jp}
\author{Judith Brinkschulte}
\address[J. Brinkschulte]{Universit\"at Leipzig, Mathematisches Institut, PF 100920, D-04009 Leipzig, Germany}
\email{brinkschulte@math.uni-leipzig.de}
\keywords{Holomorphic foliation, minimal set, Baum--Bott theory, Atiyah class, $L^2$ Hodge theory}
\subjclass[2010]{32S65; 37F75}
\date{November 14, 2021}
\thanks{The first author is partially supported by JSPS KAKENHI Grant Numbers JP18K13422 and JP21H00980.}
\dedicatory{Dedicated to Professor Takeo Ohsawa on the occasion of his 70th birthday}
\newtheorem*{MainTheorem}{Main Theorem}
\newtheorem{Theorem}{Theorem}
\newtheorem{Claim}[Theorem]{Claim}
\newtheorem{Proposition}[Theorem]{Proposition}
\newtheorem{Lemma}[Theorem]{Lemma}
\theoremstyle{definition}
\newtheorem{Definition}[Theorem]{Definition}
\theoremstyle{remark}
\newtheorem{Remark}[Theorem]{Remark}
\newtheorem{Example}[Theorem]{Example}
\newcommand\C{\mathbb{C}}  
\newcommand\R{\mathbb{R}}
\newcommand\Z{\mathbb{Z}}
\newcommand\PP{\mathbb{P}}
\newcommand\Ker{\operatorname{Ker}}
\renewcommand\Re{\operatorname{Re}}
\renewcommand\Im{\operatorname{Im}}
\newcommand\Sing{\operatorname{Sing}}
\newcommand\supp{\operatorname{supp}}
\newcommand{\pa}{\partial}
\newcommand{\opa}{\overline\pa}
\newcommand{\ol}{\overline }
\begin{document}

\maketitle

\begin{abstract}
	We prove the following result that was conjectured by Brunella: Let $X$ be a compact complex manifold of dimension $\geq 3$.
	Let $\mathcal{F}$ be a codimension one holomorphic foliation on $X$ with ample normal bundle. 
	Then every leaf of $\mathcal{F}$ accumulates to the singular set of $\mathcal{F}$.
\end{abstract}

\section{Introduction}

Let $X$ be a compact complex manifold of dimension at least two, and let $\mathcal{F}$ be a (singular) holomorphic foliation on $X$. A natural question is whether or not every leaf of $\mathcal{F}$ accumulates to the singular set $\Sing(\mathcal{F})$. An answer to this question will depend on the properties of $X$ and/or $\mathcal{F}$.

\bigskip
It is known that the normal bundle $N_\mathcal{F}$ of the foliation reflects some dynamical properties of $\mathcal{F}$. In particular, as a consequence of Baum--Bott theory, $\Sing(\mathcal{F})$  is nonempty if $\mathcal{F}$ has ample normal bundle. One may therefore try to answer the above question under the assumption that $N_\mathcal{F}$ is ample.

\bigskip
Our main result is as follows.

\begin{MainTheorem}
	Let $X$ be a compact complex manifold of dimension $\geq 3$.
	Let $\mathcal{F}$ be a codimension one holomorphic foliation on $X$ with ample normal bundle $N_\mathcal{F}$. 
	Then every leaf of $\mathcal{F}$ accumulates to $\Sing(\mathcal{F})$.
\end{MainTheorem}

This result was conjectured by Brunella in \cite{brunella1}*{Conjecture 1.1}.

\bigskip
In the special case of $X=\C\PP^n,\ n\geq 3$, the result goes back to Lins Neto \cite{linsneto}; note that in this situation, the normal bundle $N_\mathcal{F}$ is automatically ample since $\C\PP^n$ has positive holomorphic bisectional curvature. In \cite{brunella2}, Brunella gave an affirmative answer to his conjecture when $X$ is a complex torus or, more generally, a compact homogeneous manifold (cf. \cite{correa} for higher codimensional foliations). Also, under the assumption that $\mathrm{Pic}(X)= \Z$, Brunella and Perrone confirmed the conjecture in \cite{brunella-perrone}.

\bigskip
If $\dim X =2$, the problem becomes more difficult, and even for the special case $X=\C\PP^2$, no answer is known.

\bigskip
On the other hand, the Main Theorem might be seen as a further generalization of nonexistence theorems for compact Levi-flat real hypersurfaces that have attracted a great interest in the field of complex analysis over the last decades. Several results concerning the nonexistence of smooth real hypersurfaces invariant by a holomorphic foliation or, more generally, a Levi-flat real hypersurface, related to positivity of the normal bundle can be found in \cite{linsneto}, \cite{siu}, \cite{ohsawa-leviflat1}, \cite{brunella1}, \cite{ohsawa-leviflat2}, \cite{biard-iordan} and \cite{brinkschulte}. In this setting, however, the assumption $\dim X \geq 3$ is crucial: examples for $\dim X =2$ can be found in \cite{brunella1}*{Example 4.2}.

More details will be given in \S\ref{sect:preliminaries}.

\bigskip
We shall explain our main ideas by giving a sketch of the proof. 
We argue by contradiction. Assume that there exists a leaf $\mathcal{L}$ whose closure 
$M := \overline{\mathcal{L}}$ does not intersect $\Sing(\mathcal{F})$.
From Brunella's convexity result (Theorem \ref{thm:brunella}), we know that $X \setminus M$ is strongly pseudoconvex.
Since $\dim X \geq 3$, the codimension two components of $\Sing({\mathcal F})$ are contained in the maximal compact analytic set $A \subset X \setminus M$.

The first ingredient of the proof is the Baum--Bott theory (cf. \cite{brunella-perrone}*{\S2}),
although we use it in a different way from the strategy of Brunella sketched in \cite{brunella1}*{\S4}.
In previous approaches \cite{brunella2,brunella-perrone,correa}, Baum--Bott's formula was used to localize the square of the first Chern class $c^2_1(N_{\mathcal F})$ to $\Sing(\mathcal{F})$.
Instead, we use the vanishing of  the Baum--Bott class over a neighborhood of $M$ to construct a holomorphic connection $\nabla_{\rm hol}$ of $N_{\mathcal F}$ over $X \setminus A$ (\S\ref{sect:step1}).

Using this connection $\nabla_{\rm hol}$, we would like to localize the first Chern class $c_1(N_{\mathcal F})$, not its square, to $A$. 
If $\nabla_{\rm hol}$ was integrable, the desired localization would follow from the residue formula of integrable connections as  in \cite{canales,adachi-biard}.
Due to the lack of a residue formula ready-to-use, we accomplish this localization via that of the first Atiyah class $a_1(N_\mathcal{F})$, inspired by \cite{abate}. 
By rounding $\nabla_{\rm hol}$ around $A$ with a Chern connection, we readily see that $a_1(N_\mathcal{F})$ is localized to a first Atiyah form supported in a small neighborhood $W$ of $A$, which can be chosen to have smooth strongly pseudoconvex boundary.

The main technical step in our proof is the comparison between $c_1(N_\mathcal{F})$ and $a_1(N_\mathcal{F})$.
We take the harmonic projection of the localized first Atiyah form with respect to a complete K\"ahler metric on $W$, and show that the zero extension of this harmonic form gives a desired localization of $c_1(N_\mathcal{F})$.
For this delicate analysis, we exploit  $L^2$ Hodge theory on complete K\"ahler manifolds,  inspired by \cite{ohsawa-hodge1, ohsawa-hodge2} (\S\ref{sect:step2}). 

Once we can localize $c_1(N_\mathcal{F})$ to $A$,  a contradiction easily follows from the $\pa\opa$-lemma and 
 the maximum principle for strictly plurisubharmonic functions in the same way as in \cite{canales,adachi-biard} (\S\ref{sect:step3}).

\subsection*{Notations and conventions}
We will use the notation $|f| \lesssim |g|$ when there exists some positive constant $C$ such that $|f| \leq C |g|$.
We write $|f| \sim |g|$ when both $|f| \lesssim |g|$ and $|g| \lesssim |f|$ hold.
Smoothness means $C^\infty$-smoothness unless otherwise stated.
We use the same notations for holomorphic line bundles and the sheaves of germs of their holomorphic sections.

\section{Preliminaries} 
\label{sect:preliminaries}
For the readers' convenience, we explain some basic definitions and results on holomorphic foliations, the Atiyah class of holomorphic line bundles, and $L^2$ theory for the $\opa$-operator on complete K\"ahler manifolds.

\subsection{Holomorphic foliations}
Let $X$ be a complex manifold of dimension $n \geq 2$. 
In this paper, we discuss foliations in the following sense: 
\begin{Definition}
\label{def:foliation}
We say that a collection of holomorphic 1-forms $\mathcal{F} = \{ \omega_\mu \}$, where $\omega_\mu \in \Omega^1(U_\mu)$ and $\mathcal{U} = \{ U_\mu \}$ is an open covering of $X$, define 
a \emph{codimension one holomorphic foliation} on $X$ if they satisfy the following conditions: for any $\mu$ and $\nu$, 
\begin{enumerate}
\item There exists $g_{\mu\nu} \in \mathcal{O}^*(U_\mu \cap U_\nu)$ such that $\omega_\mu = g_{\mu\nu} \omega_\nu$ on $U_\mu \cap U_\nu$;
\item The analytic set $\{ p \in U_\mu \mid \omega_\mu(p) = 0\}$ has codimension $\geq 2$;
\item The integrability condition is fulfilled: $\omega_\mu \wedge d\omega_\mu = 0$ on $U_\mu$.
\end{enumerate}
\end{Definition}
\bigskip
Here  $\Omega^1$ denotes the sheaf of germs of holomorphic $1$-forms on $X$.
From the first condition, we see that $\{ g_{\mu\nu} \}$ enjoys the cocycle condition and defines a holomorphic line bundle over $X$. We call it the \emph{normal bundle} of $\mathcal{F}$ and denote it by  $N_{\mathcal F}$.
The dual bundle of $N_{\mathcal F}$ is called the \emph{conormal bundle} and denoted by $N^*_{\mathcal F}$. Note that $\{\omega_\mu\}$ defines a global section of $N^*_{\mathcal F}$.

From the first and second condition, the zero sets of the $\omega_\mu$'s glue together and define an analytic set of codimension $\geq 2$ on $X$. We call it the \emph{singular set} of $\mathcal{F}$ and denote it by $\Sing(\mathcal{F})$. 
We also denote $X^\circ := X \setminus \Sing(\mathcal{F})$ for a given foliation $\mathcal{F}$. 

On $X^\circ$, the kernels of the $\omega_\mu$'s define a holomorphic subbundle of $T^{1,0}_{X^\circ}$ of corank one. We call it the \emph{tangent bundle} of $\mathcal{F}$ and denote it by $T_\mathcal{F}$. 
The integrability condition implies that $T_\mathcal{F}$ is integrable in the sense of Frobenius, hence we can find an integral manifold through any point $p \in X^\circ$, that is, a pair of a connected complex manifold $\mathcal{L}$ of dimension $(n-1)$ and an injective holomorphic immersion $\iota \colon \mathcal{L} \to X^\circ$ such that $p \in \iota(\mathcal{L})$ and $\iota_* \colon T^{1,0}_\mathcal{L} \to T_\mathcal{F} \subset T^{1,0}_X$.
A maximal integral manifold is called a \emph{leaf} of $\mathcal{F}$. 
It follows that $X^\circ$ is decomposed into the union of all the leaves of $\mathcal{F}$. 
Therefore, we may think of a foliation as a higher dimensional analogue of flows on $X$. 

From this perspective, it would be natural to seek for a Poincar\'e--Bendixson type property for foliations.
Sometimes a leaf $\mathcal{L}$ of $\mathcal{F}$  approaches  the singular set, $\ol{\mathcal{L}} \cap \Sing(\mathcal{F}) \neq \varnothing$, and sometimes we have a closed leaf $\mathcal{L}$ of $\mathcal{F}$, which 
could be thought of as a counterpart for a periodic orbit. 
We would like to know whether there is another possibility, namely, an exceptional minimal set defined as below:

\begin{Definition} Let $M$ be a nonempty closed subset of $X$. 
\begin{enumerate}
\item We say $M$ is \emph{$\mathcal{F}$-invariant} if $M \setminus \Sing(\mathcal{F})$ is a union of some leaves of $\mathcal{F}$.
\item An $\mathcal{F}$-invariant subset $M$ is called a \emph{minimal set} of $\mathcal{F}$ if it does not contain any proper $\mathcal{F}$-invariant subset.
\item A minimal set $M$ of $\mathcal{F}$ is said to be \emph{non-trivial} if $M \cap \Sing(\mathcal{F})$ is empty.
\item A non-trivial minimal set $M$ of $\mathcal{F}$ is said to be \emph{exceptional} if $M$ is not a closed leaf of $\mathcal{F}$ nor a connected component of $X$.
\end{enumerate}
\end{Definition}

It is a well-known fact that the closure of any leaf is $\mathcal{F}$-invariant and 
contains a minimal set by Zorn's lemma. 
In an exceptional minimal set $M$, any leaf contained in $M$ is dense.

When $X$ is compact and $N_\mathcal{F}$ is ample,
 Baum--Bott theory (cf. \cite{brunella-perrone}*{\S2}) tells us that $\Sing(\mathcal{F})$ cannot be empty.
We can easily see that $\mathcal{F}$ does not have any compact leaf from a folklore argument explained below. 
Hence, nontrivial minimal sets of $\mathcal{F}$, if they exist, must be exceptional in this setting.
\begin{Proposition}
\label{prop:special1}
Let $X$ be a complex manifold of dimension $\geq 2$ and $\mathcal{F}$ a codimension one holomorphic foliation on $X$.
Assume that $N_\mathcal{F}$ admits a smooth Hermitian metric $h$ of positive curvature. Then  $\mathcal{F}$ does not have any compact leaf.
\end{Proposition}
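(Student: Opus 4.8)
The plan is to argue by contradiction: suppose $\mathcal{F}$ admits a compact leaf $\mathcal{L}$. Since leaves lie in $X^\circ = X\setminus\Sing(\mathcal{F})$, the set $\mathcal{L}$ is a compact connected complex submanifold of $X$ of dimension $n-1\ge 1$, with normal bundle canonically isomorphic to $N_{\mathcal F}|_{\mathcal L}$. Writing $\iota\colon\mathcal{L}\hookrightarrow X$ for the inclusion, the pulled-back metric $\iota^*h$ is a Hermitian metric on $N_{\mathcal F}|_{\mathcal L}$ whose Chern curvature $\iota^*\Theta_h$ is again a positive $(1,1)$-form; hence $\bigl(\tfrac{i}{2\pi}\iota^*\Theta_h\bigr)^{n-1}$ is a strictly positive volume form on the compact manifold $\mathcal{L}$, so $\int_{\mathcal L}c_1\bigl(N_{\mathcal F}|_{\mathcal L}\bigr)^{n-1}>0$ and in particular $c_1\bigl(N_{\mathcal F}|_{\mathcal L}\bigr)\ne 0$ in $H^2(\mathcal{L};\R)$. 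The whole proof is then to produce a second, \emph{flat} connection on $N_{\mathcal F}|_{\mathcal L}$, which by Chern--Weil forces the opposite.

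To build it I would use the integrability condition of Definition~\ref{def:foliation} directly. On a neighbourhood of $\mathcal{L}$ each $\omega_\mu$ is nowhere vanishing, so $\omega_\mu\wedge d\omega_\mu=0$ lets us write, locally, $d\omega_\mu=\eta_\mu\wedge\omega_\mu$ for a holomorphic $1$-form $\eta_\mu$. Differentiating $\omega_\mu=g_{\mu\nu}\omega_\nu$ and using $d\omega_\nu=\eta_\nu\wedge\omega_\nu$ gives $(\eta_\mu-\eta_\nu-d\log g_{\mu\nu})\wedge\omega_\mu=0$; since $\mathcal{L}$ is integral to $T_{\mathcal F}=\Ker\omega_\mu$, pulling back along $\iota$ annihilates $\omega_\mu$ and leaves $\iota^*\eta_\mu-\iota^*\eta_\nu=\iota^*d\log g_{\mu\nu}$. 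Hence the forms $-\iota^*\eta_\mu$ are the local connection $1$-forms of a connection $\nabla$ on $N_{\mathcal F}|_{\mathcal L}$, holomorphic and equal to the Bott partial connection of $\mathcal{F}$ restricted to the leaf. Differentiating once more, $d(d\omega_\mu)=0$ gives $d\eta_\mu\wedge\omega_\mu=0$, so $d\eta_\mu=\omega_\mu\wedge\zeta_\mu$ for some $1$-form $\zeta_\mu$, whence $d(\iota^*\eta_\mu)=\iota^*\omega_\mu\wedge\iota^*\zeta_\mu=0$: the connection $\nabla$ is flat. By Chern--Weil, $c_1\bigl(N_{\mathcal F}|_{\mathcal L}\bigr)\in H^2(\mathcal{L};\R)$ is represented by $\tfrac{i}{2\pi}$ times the curvature of any connection on $N_{\mathcal F}|_{\mathcal L}$, so applying this to the flat $\nabla$ yields $c_1\bigl(N_{\mathcal F}|_{\mathcal L}\bigr)=0$, contradicting the first paragraph.

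The argument is short and I do not expect any genuine obstacle; the only point demanding attention is checking that $\{-\iota^*\eta_\mu\}$ really patches into a (flat) connection, and both the patching and the flatness rest entirely on the fact that $\omega_\mu$ pulls back to $0$ along the leaf. Finally, it is worth recording where the standing hypothesis is used: $\dim X\ge 2$ serves only to ensure $\dim_{\C}\mathcal{L}=n-1\ge 1$, which is precisely what makes positivity of $N_{\mathcal F}|_{\mathcal L}$ incompatible with the vanishing of $c_1\bigl(N_{\mathcal F}|_{\mathcal L}\bigr)$.
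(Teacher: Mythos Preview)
Your argument is correct. Both your proof and the paper's rest on the same geometric fact---that $N_{\mathcal F}|_{\mathcal L}$ carries a canonical flat structure coming from the foliation---but you extract the contradiction differently. The paper observes that the foliated charts give locally constant transition functions for $N_{\mathcal F}|_{\mathcal L}$, notes that $\mathcal L$ is K\"ahler (because $\iota^*\Theta_h>0$), and then invokes a result of Ueda to upgrade this to a flat \emph{Hermitian} metric $h_0$; the ratio $-\log(h/h_0)$ is then a strictly plurisubharmonic function on the compact $\mathcal L$, contradicting the maximum principle. You instead build the flat (Bott) connection directly from the integrability condition and conclude via Chern--Weil that $c_1(N_{\mathcal F}|_{\mathcal L})=0$, which clashes with $\int_{\mathcal L}c_1^{n-1}>0$.

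Your route is more self-contained: it avoids the detour through K\"ahlerness of $\mathcal L$ and the external reference needed to produce a flat Hermitian metric. The paper's route, on the other hand, is chosen deliberately as a warm-up: the scheme ``flat metric versus positive metric $\Rightarrow$ strictly psh function $\Rightarrow$ maximum principle'' is exactly the template that the third step of the Main Theorem's proof follows, so their sketch foreshadows the main argument.

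Two cosmetic remarks. First, your sign: from $\iota^*\eta_\mu-\iota^*\eta_\nu=d\log(\iota^*g_{\mu\nu})$ it is $\{\iota^*\eta_\mu\}$, not $\{-\iota^*\eta_\mu\}$, that satisfies the gauge law~(\ref{gauge}) for $N_{\mathcal F}|_{\mathcal L}$; this is immaterial for the conclusion. Second, the step ``pulling back along $\iota$ annihilates $\omega_\mu$ and leaves \ldots'' tacitly uses that $(\eta_\mu-\eta_\nu-d\log g_{\mu\nu})\wedge\omega_\mu=0$ with $\omega_\mu$ nowhere vanishing forces $\eta_\mu-\eta_\nu-d\log g_{\mu\nu}$ to be a pointwise multiple of $\omega_\mu$ (division lemma), and \emph{that} is what makes the pullback vanish; it would be worth saying so explicitly.
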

\begin{proof}[Sketch of the proof]
Suppose that there exists a compact leaf $\mathcal{L}$ of $\mathcal{F}$. 
Using the description of a nonsingular holomorphic foliation by foliated charts, 
we obtain a system of local trivializations of $N_\mathcal{F}|_{\mathcal{L}}$ whose transition functions are locally constant.
Since $N_\mathcal{F}|_{\mathcal{L}}$ admits a smooth Hermitian metric of positive curvature, $\mathcal{L}$ is K\"ahler.
Then we can construct a smooth flat Hermitian metric $h_0$ of $N_\mathcal{F}|_{\mathcal{L}}$ (see, for instance, \cite{ueda}*{Proposition 1}).
This metric $h_0$ combined with the given $h$ yields a strictly plurisubharmonic function $\phi := -\log h/h_0$ on $\mathcal{L}$.
This contradicts  the maximum principle.
\end{proof}
In \S\ref{sect:proof}, under the assumption that 
$X$ is compact, $N_\mathcal{F}$ is ample, and $\dim X \geq 3$, 
we will prove the nonexistence of a nontrivial minimal set, that is, an exceptional minimal set by extending this folklore argument substantially.  
One of the key tools is the following convexity result of Brunella. 
\begin{Theorem}[Brunella \cite{brunella2}*{Proposition 3.1}]
\label{thm:brunella}
Let $X$ be a connected compact complex manifold of dimension $\geq 2$ and $\mathcal{F}$ a codimension one holomorphic foliation on $X$.
Let $M \subset X$ be an  $\mathcal{F}$-invariant subset which is disjoint from $\Sing(\mathcal{F})$. 
Assume that $N_\mathcal{F}$ admits a smooth Hermitian metric with positive curvature on a neighborhood of $M$.
Then $X \setminus M$ is strongly pseudoconvex, namely it  admits a smooth plurisubharmonic exhaustion function
$\Psi \colon X \setminus M \to \R$ which is strictly plurisubharmonic on $W \setminus M$ for some neighborhood $W$ of $M$.
\end{Theorem}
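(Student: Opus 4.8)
The plan is to construct the exhaustion function directly from the local defining $1$-forms of $\mathcal{F}$ together with the positively curved Hermitian metric $h$ on $N_\mathcal{F}$ near $M$. Recall that $\{\omega_\mu\}$ is a global holomorphic section of $N^*_\mathcal{F}$, vanishing only on $\Sing(\mathcal{F})$, which is disjoint from $M$. First I would fix a small neighborhood $W_0$ of $M$ on which $h$ has strictly positive curvature and on which $\omega := \{\omega_\mu\}$ has no zero. On each chart $U_\mu \cap W_0$ the function $\phi_\mu := \log\bigl(h^*_{\mu}|\omega_\mu|^2\bigr)$, where $h^*$ is the dual metric on $N^*_\mathcal{F}$, is well defined: indeed under a change of chart $\omega_\mu = g_{\mu\nu}\omega_\nu$ while $h^*_\mu = |g_{\mu\nu}|^{-2} h^*_\nu$, so $h^*_\mu|\omega_\mu|^2 = h^*_\nu|\omega_\nu|^2$ is a genuine global function $\rho$ on $W_0$, positive because $\omega$ has no zero there. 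Set $\varphi := \log \rho$. The key computation is that $i\partial\opa\varphi$ is strictly positive in the conormal direction and controlled along the leaves by the curvature of $h$; more precisely, writing things out one finds $i\partial\opa\varphi = -\,i\Theta_{h^*}(N^*_\mathcal{F}) + (\text{a form built from }\partial\omega\text{ that is semipositive and strictly positive transverse to }T_\mathcal{F})$, and since $-i\Theta_{h^*}(N^*_\mathcal{F}) = i\Theta_h(N_\mathcal{F}) > 0$, we get that $\varphi$ is strictly plurisubharmonic on a possibly smaller neighborhood $W$ of $M$. (This is exactly the mechanism that in the compact-leaf case of Proposition \ref{prop:special1} produced the strictly psh function contradicting the maximum principle; here $M$ is not a leaf, so no contradiction — instead we keep $\varphi$ as a local strictly psh function near $M$.)

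Next I would turn $\varphi$ into a global exhaustion of $X\setminus M$. Note $\varphi \to -\infty$ as one approaches $M$ (since $\rho\to 0$ there, because $\omega$ restricted to leaves in $M$ has a zero of the conormal $1$-form in the leaf direction — more carefully, $M$ being $\mathcal{F}$-invariant means the leaves in $M$ are integral to $\ker\omega$, and one checks $\rho$ vanishes exactly on $M\cap W_0$; this is the point where $\mathcal{F}$-invariance of $M$ enters). Thus $-\varphi \to +\infty$ near $M$. On $X\setminus M$, which may fail to be pseudoconvex far from $M$, we cannot expect $-\varphi$ itself to be psh; but strict plurisubharmonicity is only required near $M$. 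So I would take a smooth exhaustion $\chi$ of $X\setminus M$ (e.g. $\chi = -\log(\text{dist to }M)$ suitably smoothed, or any exhaustion, since $X$ is compact), a cutoff $\theta$ equal to $1$ near $M$ and supported in $W$, and a rapidly increasing convex function $\lambda$, and set
\[
\Psi := \theta\cdot(-\varphi) + (1-\theta)\cdot C + \lambda(\chi)
\]
for a large constant $C$, adjusting so that the non-psh contributions of the gluing (supported away from $M$) are dominated by $\lambda(\chi)$ — which is legitimate because those error terms are bounded on the compact region $\supp(d\theta)$ while $\lambda''$ can be made as large as we like there. On $W$ (shrunk so $d\theta=0$ there) $\Psi = -\varphi + \lambda(\chi)$ is strictly psh as the sum of a strictly psh and a psh function; outside $W$ it is merely psh after choosing $\lambda$ convex and increasing fast enough; and it is an exhaustion since $-\varphi\to+\infty$ near $M$ and $\lambda(\chi)\to+\infty$ at "infinity" in $X\setminus M$ (which is compact, so only the end near $M$ matters — in fact $\lambda(\chi)$ alone already exhausts).

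The main obstacle, and the part requiring care, is the curvature computation for $\varphi$: one must verify that $i\partial\opa\log(h^*|\omega|^2)$ has the claimed sign, and in particular that the "bad" terms coming from $\partial\omega$ (which appear because $\omega$ is not parallel) are nonnegative and do not spoil strict positivity in the transverse direction. Here the integrability condition $\omega\wedge d\omega = 0$ is essential: it forces $d\omega = \eta\wedge\omega$ for a local $1$-form $\eta$, so the relevant term factors through $\omega$ and contributes nonnegatively to $i\partial\opa\varphi$ along $\ker\omega$ while the curvature term alone gives strict positivity transversally; combining, after possibly shrinking $W$, yields strict plurisubharmonicity on all of $W$. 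Once this pointwise Levi-form estimate is in hand, the globalization via cutoffs and a convex exhaustion is routine given compactness of $X$.
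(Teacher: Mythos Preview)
The paper does not give its own proof of Theorem~\ref{thm:brunella}; it is quoted from \cite{brunella2}. So your proposal has to stand on its own.

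There is a genuine gap: your function $\rho = h^*|\omega|^2$ does \emph{not} vanish on $M$, so $\varphi=\log\rho$ is smooth and bounded near $M$ and cannot produce an exhaustion of $X\setminus M$. The section $\{\omega_\mu\}$ of $N^*_\mathcal{F}$ vanishes exactly on $\Sing(\mathcal{F})$, which by hypothesis is disjoint from $M$; hence $\rho>0$ on a full neighborhood of $M$. Your justification---that the leaves in $M$ are integral to $\ker\omega$---confuses the vanishing of $\omega_\mu$ \emph{as a covector evaluated on leaf-tangent vectors} (which holds at every point of $X^\circ$, not just on $M$) with the vanishing of $\omega_\mu$ \emph{as a section} (which happens only on $\Sing(\mathcal{F})$). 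Concretely, in the trivialization of $N^*_\mathcal{F}$ furnished by $\omega_\mu$ itself the section is identically $1$, so $\rho=h^*_\mu$ and $\varphi=\log h^*_\mu$; thus $i\pa\opa\varphi=i\Theta_h(N_\mathcal{F})>0$, but $\varphi$ never sees $M$ at all---it depends only on the metric $h$, not on which leaves happen to lie in $M$. (There is also a sign inconsistency downstream: having argued that $\varphi$ is strictly plurisubharmonic, you then insert $-\varphi$ into $\Psi$ and still claim strict plurisubharmonicity.)

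The missing idea is that the exhaustion must be built from the \emph{transversal distance to $M$}, not from $|\omega|$. In a foliated chart $(z',z_n)$ with plaques $\{z_n=\mathrm{const}\}$, $\mathcal{F}$-invariance of $M$ says $M$ meets the chart in $\{z_n\in T\}$ for a closed transversal set $T$, and Brunella works with quantities of the type $-\log\bigl(h_\mu\cdot d(z_n,T)^2\bigr)$, using the positive curvature of $h$ both to make the local pieces patch and to force the Levi form to be positive. This is where $\mathcal{F}$-invariance of $M$ genuinely enters; in your construction it never did.
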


When an exceptional minimal set $M$ happens to be a smooth real hypersurface, it must be a \emph{Levi-flat hypersurface}, that is, 
a smooth real hypersurface having a nonsingular smooth foliation by complex hypersurfaces. 
This foliation is called the \emph{Levi foliation} of $M$.
For Levi-flat hypersurfaces, a variant of Brunella's convexity result holds:
\begin{Theorem}[cf. \cite{brunella1}*{Proposition 2.1},\cite{ohsawa-leviflat2}*{Proposition 1.1}, \cite{biard-iordan}*{Proposition 1}]
Let $X$ be a connected compact complex manifold of dimension $\geq 2$ and $M$ a $C^3$-smooth closed Levi-flat hypersurface.
Assume that the holomorphic normal bundle of $M$ admits a $C^2$-smooth Hermitian metric with positive curvature along the leaves of the Levi foliation.
Then $X \setminus M$ is strongly pseudoconvex.
\end{Theorem}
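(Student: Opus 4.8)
The plan is to reduce the statement to a semi-local construction near $M$. It suffices to produce a neighborhood $\Omega$ of $M$ in $X$ and a smooth function $\Psi_0\colon\Omega\setminus M\to\R$ which is strictly plurisubharmonic on $\Omega\setminus M$ and tends to $+\infty$ along $M$: since $X$ is compact, $\Psi_0$ is bounded on $\ol{\Omega\setminus\Omega'}$ for any smaller neighborhood $\Omega'$ of $M$, so for a sufficiently large constant $C$ the regularized maximum of $\Psi_0$ and $C$ agrees with $\Psi_0$ near $M$ and with $C$ away from $M$; gluing it to the constant $C$ over $X\setminus\Omega$ then gives a smooth plurisubharmonic exhaustion of $X\setminus M$ that is strictly plurisubharmonic on a neighborhood of $M$, which is the assertion.

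For the semi-local problem I would fix a $C^3$ defining function $\rho$ of $M$ on $\Omega$, say with $\Omega^-=\{\rho<0\}$ (working on each side, or using the boundary distance in place of $-\rho$ if $M$ is not two-sided). Since $M$ is Levi-flat, the leaves of its Levi foliation are complex hypersurfaces contained in $M=\{\rho=0\}$, so $\pa\rho|_M$ annihilates their tangent spaces; hence $\pa\rho|_M$ is a nowhere-vanishing section of the conormal bundle $N^*$ of the Levi foliation (the dual of the holomorphic normal bundle of $M$), and it is holomorphic along each leaf $L$ because $\opa(\pa\rho)|_L=0$. Writing $e^{\psi}:=\|\pa\rho|_M\|^2$ in the dual metric on $N^*$ produces a $C^2$ function $\psi$ on $M$ whose tangential complex Hessian $i\pa_b\opa_b\psi$ along the leaves is the curvature of the given metric on the normal bundle along those leaves; so the hypothesis says exactly that $i\pa_b\opa_b\psi>0$. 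Extending $\psi$ to a $C^2$ function $\widetilde\psi$ near $M$, the candidate is $\Psi_0:=-\log(-\rho)+t\,\widetilde\psi$ on $\Omega^-$ for a constant $t>0$ to be chosen large (and the analogue on the other side); it clearly tends to $+\infty$ along $M$.

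To check that $\Psi_0$ is strictly plurisubharmonic near $M$ I would use
\[
i\pa\opa\Psi_0=\frac{i\pa\opa\rho}{-\rho}+\frac{i\pa\rho\wedge\opa\rho}{\rho^{2}}+t\,i\pa\opa\widetilde\psi
\]
and a frame adapted to $\pa\rho$: in the direction transverse to $M$ the middle term is positive of rank one and of size $|\rho|^{-2}$, so it dominates there; the leaf block of $(-\rho)^{-1}i\pa\opa\rho$ stays bounded near $M$, as the Levi form of $\rho$ vanishes to first order in $\rho$, while $t\,i\pa_b\opa_b\psi$ supplies the positivity in leaf directions; and the only awkward terms are the off-diagonal leaf--transverse entries, which are of size $|\rho|^{-1}$. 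Taking $t$ large --- i.e.\ replacing the metric by a high power --- so that the positive leaf block dominates the bounded error terms, a Schur-complement estimate gives $i\pa\opa\Psi_0>0$ on a one-sided neighborhood of $M$. Together with the reduction step, this shows that $X\setminus M$ is strongly pseudoconvex.

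The technical heart, and the main obstacle, is this Schur-complement estimate: one must control, uniformly near $M$ and in a frame that varies from point to point, the off-diagonal leaf--transverse part of the complex Hessian --- which a priori blows up like $|\rho|^{-1}$ --- against the positive leaf block, and it is for the continuity of the relevant second derivatives that the regularity hypotheses ($M$ of class $C^3$, the metric of class $C^2$) are needed. If $M$ were real-analytic one could instead complexify the Levi foliation to a genuine nonsingular holomorphic foliation on a neighborhood of $M$ and deduce the conclusion directly from Theorem \ref{thm:brunella}; the substance of the present statement is to reach the same conclusion in finite regularity, where no such complexification exists.
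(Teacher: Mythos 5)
A preliminary remark: the paper does not prove this theorem at all --- it is quoted with ``cf.''\ from Brunella, Ohsawa and Biard--Iordan --- so your proposal can only be measured against those standard arguments, whose general shape (a semi-local candidate $-\log(-\rho)+{}$correction, positivity in leaf directions from the curvature hypothesis, $|\rho|^{-2}$ positivity transversally, absorption of the mixed terms by Cauchy--Schwarz, then gluing by a regularized maximum) you have correctly reproduced.

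The genuine gap is the claim that $\pa\rho|_M$ is a leafwise holomorphic section of the conormal bundle $N^*$, ``because $\opa(\pa\rho)|_L=0$''. The pullback of $\opa\pa\rho$ to a leaf $L$ only sees the leaf--leaf components of the Levi form, and those indeed vanish by Levi-flatness; but holomorphy of $\pa\rho|_L$ as a section of $N^*_{L/X}$ is a statement about the mixed components: in coordinates adapted to $L=\{w=0\}$ one has $\pa\rho|_L=\rho_w(z,0)\,dw$, and its leafwise $\opa$ is governed by $\rho_{w\bar z_j}(z,0)$, which has no reason to vanish. Concretely, replacing $\rho$ by $e^\lambda\rho$ multiplies $\pa\rho|_M$ by $e^\lambda$ and changes your $\psi$ by $2\lambda$, so $i\pa_b\opa_b\psi$ cannot coincide with the leafwise curvature of $h$ for an arbitrary defining function; moreover a Levi-flat may contain a compact leaf whose conormal bundle admits no nonvanishing holomorphic section at all, while $\pa\rho|_L$ never vanishes. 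What is true is $i\pa_b\opa_b\psi=\Theta_h|_{\text{leaf}}+i\pa_b\opa_b\log|a|^2$, where $a$ compares $\pa\rho$ with a local leafwise-holomorphic frame of $N^*$ and is in general not CR. Since you multiply $\widetilde\psi$ by $t$, this unsigned error term is multiplied by $t$ together with the good curvature term, so it cannot be beaten by choosing $t$ large --- and that absorption is exactly what your Schur-complement step needs. The rest of your estimates (the $O(|\rho|)$ bound on the leafwise block of $i\pa\opa\rho$, the $|\rho|^{-2}$ transverse term, the regularized-maximum gluing) are sound, but the proof stands or falls with producing a potential whose leafwise complex Hessian really is a large positive multiple of the leafwise curvature, and as written you have not produced one.

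The cited proofs avoid this trap: they do not use the single global, generally non-CR section $\pa\rho$, but work with local leafwise-holomorphic trivializations of the normal bundle of the Levi foliation and the corresponding local weights of the given positive metric, patching local potentials of the form $-\log(-\rho)+{}$(local weight) --- or, equivalently, expand the signed distance function to second order, which is one place where the $C^3$ regularity of $M$ and $C^2$ regularity of the metric are genuinely used. If you want to salvage your global formulation, you must either choose $\rho$ so that $\pa\rho$ becomes leafwise holomorphic (a nontrivial leafwise $\opa$-problem on $M$ that is in general not solvable) or reorganize the argument around such local CR data; with that replacement, the remainder of your computation does close.
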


This convexity result played a fundamental role in the proof of the following theorem, which implies that 
an exceptional minimal set of a codimension one holomorphic foliation $\mathcal{F}$ cannot be a smooth compact real hypersurface when $\dim X \geq 3$ and $N_\mathcal{F}$ admits a smooth Hermitian metric of positive curvature.
\begin{Theorem}[\cite{brinkschulte}*{Theorem 1.1}]  \label{leviflat}
Let $X$ be a complex manifold of dimension $\geq 3$. Then there does not exist a smooth compact Levi-flat  hypersurface $M$ in $X$ such that the holomorphic normal bundle of $M$ admits a smooth Hermitian metric with positive curvature along the leaves of the Levi foliation. 
\end{Theorem}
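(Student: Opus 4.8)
The plan is to argue by contradiction, adapting the maximum-principle argument behind Proposition \ref{prop:special1}. Suppose $M \subset X$ were a smooth compact Levi-flat hypersurface with $n := \dim_{\C} X \geq 3$, and let $h$ be a smooth Hermitian metric on the holomorphic normal bundle $N_M$ of $M$ whose curvature is positive along the leaves of the Levi foliation. First I would fix the geometry near $M$: a defining function $\rho$ with $d\rho \neq 0$ on $M$, a tubular neighborhood $U \cong M \times (-\varepsilon,\varepsilon)$, and a covering of $M$ by foliated charts in which the Levi foliation is $\{ t = \mathrm{const} \}$ ($t$ the real transverse coordinate); in such charts $N_M$ is trivialized with transition functions $g_{\mu\nu}$ holomorphic along the leaves. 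By the convexity result for Levi-flat hypersurfaces recalled just above, $\Omega := X \setminus M$ is strongly pseudoconvex; I would fix a smooth plurisubharmonic exhaustion $\Psi \colon \Omega \to \R$ that is strictly plurisubharmonic on $W \setminus M$ for a neighborhood $W$ of $M$, so that $\Psi \to +\infty$ along $M$ and, for $c$ large, $V := \{ \Psi > c \} \subset \Omega$ is a neighborhood of $M$ carrying a complete K\"ahler metric.

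The analytic core is to promote the leafwise positivity of $h$ into a smooth Hermitian metric $h_0$ of $N_M$ that is \emph{flat along the leaves} of the Levi foliation. A Diederich--Fornaess-type computation from $\rho$ and $h$ --- the content behind the convexity result, in its sharp form --- produces in addition a bounded strictly plurisubharmonic function on $W \setminus M$; this bounded weight, combined with the complete K\"ahler metric on $V$, is what powers Donnelly--Fefferman/Ohsawa-type $L^2$ estimates for $\opa$ without loss. Concretely, the leafwise-holomorphic cocycle $\{ g_{\mu\nu} \}$, smoothly extended across $V$ and patched with a partition of unity, yields a $\opa$-closed, line-bundle-valued $(0,1)$-form $\beta$ on $V$, supported near $M$, representing the leafwise first Atiyah (Chern) class of $N_M$ --- the obstruction to leafwise-flatness; the leafwise-positive curvature of $h$ is precisely what makes $\beta$ square-integrable for the complete metric with the bounded weight. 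Solving $\opa u = \beta$ with $u \in L^2$ --- hence bounded near $M$, by the Donnelly--Fefferman estimate --- and passing to the boundary values of $u$ on $M$ corrects $h$ into the sought leafwise-flat $h_0$, the correction being a $\pa\opa$-lemma along the leaves built into the $L^2$ solution. I would carry out this step in the spirit of the $L^2$ Hodge theory of \cite{ohsawa-hodge1, ohsawa-hodge2} and of the comparison between the first Chern class and the first Atiyah class sketched for the Main Theorem.

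With $h_0$ at hand, the contradiction is exactly that of Proposition \ref{prop:special1}: $\varphi := -\log(h/h_0)$ is a smooth function on the compact manifold $M$ that is strictly plurisubharmonic along every leaf, so it attains its maximum at some $p \in M$, and its restriction to the leaf through $p$ then has a local maximum at $p$, contradicting strict plurisubharmonicity along that leaf. Hence no such $M$ can exist.

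\textbf{Main obstacle.} I expect the hard part to be the $L^2$ step near $M$: proving the Donnelly--Fefferman estimate and the $\opa$-solvability on $V$ when the complete K\"ahler metric degenerates along $M$ and the available weight is merely bounded, checking that $\beta$ genuinely lies in $L^2$, and controlling the boundary values of $u$ so that the correction descends to an honest leafwise-flat metric on $M$. This is also where the hypothesis $\dim X \geq 3$ is used --- in dimension two the corresponding $L^2$ solvability fails, and Levi-flat hypersurfaces with leafwise-positive normal bundle do occur, cf.\ \cite{brunella1}*{Example 4.2}.
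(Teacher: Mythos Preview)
The paper does not prove this theorem; it is quoted verbatim from \cite{brinkschulte}*{Theorem 1.1} as background and no argument is given or sketched here. There is therefore no proof in the present paper against which to compare your proposal.

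For what it is worth, your outline borrows the maximum-principle endgame of Proposition~\ref{prop:special1} and the $L^2$ philosophy of \S\ref{sect:step2}, which is reasonable in spirit. But the analytic core, as written, has genuine gaps rather than merely elided details. The assertion that an $L^2$ solution $u$ of $\opa u=\beta$ for the complete metric is ``hence bounded near $M$'' is not justified: Donnelly--Fefferman-type estimates give $L^2$ control in the same weighted space, not $L^\infty$ bounds, and the passage from $L^2$ near a degenerating boundary to pointwise boundary behavior requires a separate argument you do not supply. Likewise, ``passing to the boundary values of $u$ on $M$'' so as to correct $h$ to a leafwise-flat metric presupposes a trace or regularity statement that is nowhere established; an $L^2$ function on a one-sided neighborhood need not have boundary values on $M$ at all. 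You also do not verify that the obstruction form $\beta$ you construct is actually square-integrable for the complete metric and bounded weight you invoke --- this is exactly the place where the hypothesis $n\geq 3$ must enter quantitatively, and waving at it is not enough. Filling these gaps is essentially the content of \cite{brinkschulte}, to which the present paper simply defers.
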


Theorem  \ref{leviflat} generalizes nonexistence theorems for Levi-flat hypersurfaces previously obtained in \cite{linsneto}, \cite{siu}, \cite{ohsawa-leviflat1}, \cite{brunella1}, \cite{ohsawa-leviflat2} and \cite{biard-iordan}.

When we show the nonexistence of compact leaves (Proposition \ref{prop:special1}), or Levi-flat hypersurfaces (Theorem \ref{leviflat}), 
we may take advantage of the property that the normal bundle $N_{\mathcal F}$ is topologically trivial 
along these minimal sets.
However, for a general exceptional minimal set, we do not know whether such a property holds a priori (cf. \cite{ohsawa-reduction}*{Theorem 1.1}).
We shall overcome this difficulty by using the localization of the first Atiyah class of $N_{\mathcal F}$ 
and exploiting the $L^2$ theory for $\opa$ on complete K\"ahler manifolds,
in particular, the $L^2$ Hodge theory, even further.

\subsection{Atiyah class}
Let $X$ be a complex manifold and $L$ a holomorphic line bundle over $X$, and let $\nabla$ be a connection on $L$. 

\begin{Definition}
$\nabla$ is \emph{of type $(1,0)$} (or a \emph{$(1,0)$-connection}) if the connection forms with respect to a holomorphic frame are of type $(1,0)$.
\end{Definition}

\begin{Example} \label{ex:connection}
We will use two kinds of $(1,0)$-connections later.
\begin{enumerate}
\item A holomorphic connection is a special case of a $(1,0)$-connection. In this case, the connection forms with respect to a holomorphic frame are holomorphic $1$-forms.
\item If $L$ moreover has the structure of a Hermitian holomorphic line bundle, then its Chern connection is a $(1,0)$-connection.
\end{enumerate}
\end{Example}

Note that a $(1,0)$-connection is equivalent to the following data: Let $\{ g_{\mu\nu} \} \in H^1(\mathcal{U}, \mathcal{O}^*)$ be a cocycle defining the line bundle $L$ with respect to an open covering $\mathcal{U} = \{ U_\mu \}$ of $X$. Then the connection forms $\{ \eta_\mu \}$ satisfy the gauge transformation law 
\begin{equation} \label{gauge}
\eta_\mu = \frac{d g_{\mu\nu}}{g_{\mu\nu}} + \eta_\nu\quad\text{in $U_\mu\cap U_\nu$}.
\end{equation}

If $\nabla$ is a $(1,0)$-connection, then its curvature form is given by $d \eta_\mu = \pa \eta_\mu + \opa \eta_\mu$. 
This glues together to a well-defined 2-form thanks to (\ref{gauge}). Since the $g_{\mu\nu}$ are holomorphic, also $\opa \eta_\mu$ glues together to a global $(1,1)$-form on $X$.
This form is called {\it the first Atiyah form of $\nabla$} and denoted by $a_1 (\nabla)$.

\begin{Example} We can easily describe the first Atiyah forms for Example \ref{ex:connection}.
\begin{enumerate}
\item If $\nabla$ is a holomorphic connection, then $a_1(\nabla) = 0$. 
\item	If $\nabla$ is the Chern connection of a Hermitian holomorphic line bundle, then $a_1(\nabla)$ is nothing but the first Chern form $c_1(\nabla)$.
\end{enumerate}
\end{Example}

Now suppose we have two $(1,0)$-connections $\nabla_1,\nabla_2$ on $L$, with connection forms $\{\eta_\mu^1\}, \{\eta_\mu^2\}$. Then (\ref{gauge}) implies
$$ \eta_\mu^1 -  \eta_\mu^2 =  \eta_\nu^1 -  \eta_\nu^2\quad\text{in $U_\mu\cap U_\nu$},$$
hence $a_1(\nabla_1,\nabla_2):=  \eta_\mu^1 -  \eta_\mu^2$ defines a global $(1,0)$-form satisfying
\[
\opa a_1(\nabla_1,\nabla_2) = a_1(\nabla_1)-a_1(\nabla_2).
\]
In particular, we see that \emph{the first Atiyah class} of $L$, defined as the class represented by $a_1(\nabla)$ in $H^{1,1}(X)$ for an arbitrary $(1,0)$-connection on $L$, is well defined.
We denote the first Atiyah class of $L$ by $a_1(L)$.

\bigskip
Here we discussed the first Atiyah class only.  Atiyah classes of higher degrees can be defined for holomorphic vector bundles of arbitrary rank. 
We refer the reader to \cite{abate} for more details.

\subsection{$L^2$ theory for $\opa$ on complete K\"ahler manifolds}
\label{sect:L2}
Let $X$ be a complex manifold of dimension $n$ endowed with a smooth Hermitian metric $\omega$. By $L^2_{p,q}(X,\omega)$ (resp. $L^2_k (X,\omega)$) we denote the space of $(p,q)$-forms on $X$ (resp. $k$-forms on $X$) that are integrable with respect to the global $L^2$-norm
$$\Vert u\Vert^2 = \int_X \vert u(x)\vert^2_\omega dV_\omega,$$
where $\vert u(x)\vert_\omega$ is the pointwise Hermitian norm and $dV_\omega$ is the volume form. 

On $X$ we consider the differential operators $d,\pa,\opa$, as well as the corresponding Laplace operators (depending on the metric $\omega$)
$$\Delta = d d^\ast + d^\ast d,\quad \Delta^\prime = \pa\pa^\ast + \pa^\ast\pa,\quad \Delta^{\prime\prime} = \opa\opa^\ast + \opa^\ast\opa .$$
All these operators extend naturally as linear, closed, densely defined operators on the previously defined $L^2$-spaces in the sense of distributions.
We will then consider the spaces of harmonic forms
$$\mathcal{H}^k(X,\omega)= \lbrace u\in L^2_k(X,\omega)\mid \Delta u =0\rbrace,$$
$$\mathcal{H}^{p,q}(X,\omega)= \lbrace u\in L^2_{p,q}(X,\omega)\mid \Delta^{\prime\prime} u =0\rbrace.$$

Of special importance will be the case of a {\it complete} metric, which is equivalent to the existence of a smooth exhaustion function $\psi\colon X\longrightarrow\R$ satisfying $\vert d\psi\vert_\omega \leq 1$. 
In this case, the spaces $\mathcal{D}^{p,q}(X)$ of smooth compactly supported forms are dense in $L^2_{p,q}(X,\omega)\cap\mathrm{Dom}\opa\cap\mathrm{Dom}\opa^\ast$ in the norm $\Vert u\Vert + \Vert\opa u\Vert + \Vert\opa^\ast u\Vert$, and we have
$$\mathcal{H}^k(X,\omega)=  \lbrace u\in L^2_k(X,\omega)\mid d u = d^\ast u =0\rbrace,$$
$$\mathcal{H}^{p,q}(X,\omega)= \lbrace u\in L^2_{p,q}(X,\omega)\mid \opa u =  \opa^\ast u = 0\rbrace.$$

We will also need the following $L^2$ de Rham and Dolbeault cohomology groups
$$H^k_{L^2}(X,\omega) = L^2_k(X,\omega)\cap \Ker d / L^2_k(X,\omega)\cap \Im d,$$
$$H^{p,q}_{L^2}(X,\omega) = L^2_{p,q}(X,\omega)\cap \Ker \opa / L^2_{p,q}(X,\omega)\cap \Im \opa.$$
Then, if $\omega$ is complete, we have a natural isomorphism
$$\mathcal{H}^{p,q}(X,\omega) \simeq H^{p,q}_{L^2}(X,\omega),$$
provided the latter cohomology group is finite dimensional, as well as the corresponding isomorphism for the $L^2$ de Rham cohomology. 

If moreover the metric $\omega$ is {\it K\"ahler}, we have $\Delta = 2\Delta^\prime = 2\Delta^{\prime\prime}$, which implies the usual Hodge decomposition
$$H^k_{L^2}(X,\omega) {\simeq} \bigoplus_{p+q=k} H^{p,q}_{L^2}(X,\omega)$$
if the $L^2$ de Rham and Dolbeault cohomology groups are finite dimensional.

\bigskip
In the following sections, we will also need weighted $L^2$-spaces of the type $L^2_{p,q}(X,\omega,\varphi)$, consisting of forms that are integrable with respect to the weighted $L^2$-norm
$$\Vert u\Vert_\varphi^2 = \int_X \vert u(x)\vert^2_\omega e^{-\varphi}dV_\omega,$$
where $\varphi: X\longrightarrow\R$ is a (smooth) weight function. In this case,  the adjoint $\opa^\ast_\varphi$ depends not only on the metric $\omega$, but also on the weight function $\varphi$.

\bigskip
We refer the reader to \cite{Demailly-Hodge} for the details on the discussion above.

\section{Proof of Brunella's conjecture} \label{sect:proof}
\begin{proof}[Proof of the Main Theorem]
The proof is by contradiction. Assume that there exists a leaf $\mathcal{L}$ whose closure 
$M := \overline{\mathcal{L}}$ does not intersect  $S := \Sing(\mathcal{F})$.
We may assume that $X$ is connected by replacing it with the connected component containing $M$ if necessary.
From Theorem \ref{thm:brunella}, $X \setminus M$ is strongly pseudoconvex.
Denote by $A$ the maximal compact analytic set of $X \setminus M$,
and by $S^*$ the codimension two part of $S$. 
Note that $S^* \subset A$ since $\dim X \geq 3$.

\subsection{First step} \label{sect:step1}
In this step, we shall construct a holomorphic connection of $N_{\mathcal{F}}$ over $X \setminus A$.

Cover $X \setminus S^*$ by open sets $\{U_\mu \}$ and take the collection of holomorphic 1-forms $\{ \omega_\mu \}$ defining $\mathcal{F}$ on $X \setminus S^*$.
By taking a refinement of $\{U_\mu\}$ if necessary, the integrability condition yields $\beta_\mu \in \Omega^1(U_\mu)$ such that $d\omega_\mu = \beta_\mu \wedge \omega_\mu$ thanks to Malgrange's theorem  \cite{malgrange}*{Th\'eor\`eme (0.1)}.
We denote the cocycle defining the normal bundle $N_\mathcal{F}$ by  $\{g_{\mu\nu}\}$ as in Definition \ref{def:foliation}.
\begin{Claim}
The cocycle $\{ \gamma_{\mu\nu} := d g_{\mu\nu}/g_{\mu\nu} - \beta_\mu + \beta_\nu \}$ defines a cohomology class, 
called \emph{the Baum--Bott class}, $BB_\mathcal{F} \in H^1(X \setminus S^*, N^*_\mathcal{F})$.
\end{Claim}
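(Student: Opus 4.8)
The plan is to verify directly that $\{\gamma_{\mu\nu}\}$ is a \v{C}ech $1$-cocycle for the conormal sheaf $N^*_{\mathcal F}$, viewed as usual as the rank-one subsheaf of $\Omega^1_{X\setminus S^*}$ generated over $U_\mu$ by $\omega_\mu$. Three things must be checked: that each $\gamma_{\mu\nu}$ is a holomorphic $1$-form on $U_\mu\cap U_\nu$; that it is in fact a section of $N^*_{\mathcal F}$ there; and that $\{\gamma_{\mu\nu}\}$ satisfies the additive cocycle relation $\gamma_{\mu\nu}+\gamma_{\nu\rho}+\gamma_{\rho\mu}=0$ on triple overlaps.

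The first point is immediate: $g_{\mu\nu}\in\mathcal{O}^*(U_\mu\cap U_\nu)$ makes $dg_{\mu\nu}/g_{\mu\nu}$ a holomorphic $1$-form, and the $\beta_\mu$ are holomorphic $1$-forms by Malgrange's theorem. For the second point I would differentiate the gluing relation $\omega_\mu=g_{\mu\nu}\omega_\nu$: this gives $d\omega_\mu=dg_{\mu\nu}\wedge\omega_\nu+g_{\mu\nu}\,d\omega_\nu$, and substituting $d\omega_\mu=\beta_\mu\wedge\omega_\mu=g_{\mu\nu}(\beta_\mu\wedge\omega_\nu)$ together with $d\omega_\nu=\beta_\nu\wedge\omega_\nu$ and dividing by $g_{\mu\nu}$ yields $\gamma_{\mu\nu}\wedge\omega_\nu=0$. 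On $(U_\mu\cap U_\nu)\setminus\Sing(\mathcal{F})$, where $\omega_\nu$ is nowhere zero, this forces $\gamma_{\mu\nu}=h_{\mu\nu}\,\omega_\nu$ for a holomorphic function $h_{\mu\nu}$; since $\Sing(\mathcal{F})$ is an analytic set of codimension $\geq 2$, $h_{\mu\nu}$ extends holomorphically across it by the Riemann extension theorem, so the identity $\gamma_{\mu\nu}=h_{\mu\nu}\,\omega_\nu$ persists and $\gamma_{\mu\nu}$ is a genuine section of $N^*_{\mathcal F}$ over $U_\mu\cap U_\nu$. This passage, from ``$\gamma_{\mu\nu}$ pointwise proportional to $\omega_\nu$ away from $\Sing(\mathcal{F})$'' to ``$\gamma_{\mu\nu}\in N^*_{\mathcal F}$'', is the only step requiring any care, and it is exactly where the hypothesis that $\Sing(\mathcal{F})$ has codimension $\geq 2$ enters.

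For the third point, on a triple overlap the terms involving the $\beta$'s, namely $(-\beta_\mu+\beta_\nu)+(-\beta_\nu+\beta_\rho)+(-\beta_\rho+\beta_\mu)$, telescope to $0$, while the multiplicative cocycle identity $g_{\mu\rho}=g_{\mu\nu}g_{\nu\rho}$ gives $d\log g_{\mu\rho}=d\log g_{\mu\nu}+d\log g_{\nu\rho}$, so the logarithmic-derivative contributions cancel as well; hence $\{\gamma_{\mu\nu}\}$ is a cocycle and defines a class $BB_{\mathcal F}\in H^1(X\setminus S^*,N^*_{\mathcal F})$. To justify calling this \emph{the} Baum--Bott class one should also check that the class does not depend on the auxiliary choices: replacing $\beta_\mu$ by another $1$-form $\beta'_\mu$ with $d\omega_\mu=\beta'_\mu\wedge\omega_\mu$ changes it by a $1$-form $\sigma_\mu$ with $\sigma_\mu\wedge\omega_\mu=0$, hence (again by Riemann extension) a section of $N^*_{\mathcal F}$ over $U_\mu$, so $\gamma_{\mu\nu}$ changes by the coboundary $\{-\sigma_\mu+\sigma_\nu\}$, and compatibility with refinements of the cover is routine. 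Beyond this bookkeeping I anticipate no real obstacle.
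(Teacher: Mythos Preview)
Your proof is correct and follows essentially the same approach as the paper: both establish $\gamma_{\mu\nu}\wedge\omega_\nu=0$ by differentiating the gluing relation, invoke Riemann's extension theorem to write $\gamma_{\mu\nu}$ as a holomorphic multiple of $\omega_\nu$, and note that the cocycle condition is immediate. Your version is in fact more thorough, since you also verify independence of the choice of $\beta_\mu$ and of the cover, which the paper omits.
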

\begin{proof}
On $U_\mu \cap U_\nu$, we have 
\begin{align*}
\gamma_{\mu\nu} \wedge \omega_\nu 
& = \frac{d g_{\mu\nu}}{g_{\mu\nu}} \wedge \omega_\nu - \beta_\mu \wedge \omega_\nu + \beta_\nu \wedge \omega_\nu\\
&= \frac{1}{g_{\mu\nu}}d(g_{\mu\nu}\omega_\nu)  - d\omega_\nu - \beta_\mu \wedge \omega_\nu +  d\omega_\nu\\
&= \frac{1}{g_{\mu\nu}}d\omega_\mu  - \frac{1}{g_{\mu\nu}}\beta_\mu \wedge \omega_\mu = 0.
\end{align*}
Hence, there exists $f_{\mu\nu} \in \mathcal{O}(U_\mu \cap U_\nu)$ such that $\gamma_{\mu\nu} = f_{\mu\nu} \omega_\nu$
thanks to Riemann's extension theorem.
Since $\{\omega_\mu\}$ defines a global section of $N^*_\mathcal{F}$, we see that $\gamma_{\mu\nu}$ is a section of $N^*_{\mathcal{F}}$ over $U_\mu \cap U_\nu$.
The cocycle condition is clear. 
\end{proof} 

Take open connected neighborhoods $V$ and $W$ of $A$ so that $V \Subset W \Subset X \setminus M$ and $W$ has smooth strictly pseudoconvex boundary.
\begin{Claim}
$BB_\mathcal{F}|_{X \setminus \overline{W}} = 0$ in $H^1(X \setminus \overline{W}, N^*_{\mathcal{F}})$.
\end{Claim}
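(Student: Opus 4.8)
The plan is to show that the Baum--Bott class vanishes over $X \setminus \overline{W}$ by exploiting the fact that $X \setminus M$ is strongly pseudoconvex together with the ampleness (equivalently, positivity) of $N_\mathcal{F}$ near $M$. First I would recall that, since $W$ has smooth strictly pseudoconvex boundary, the complement $X \setminus \overline{W}$ is a manifold whose only ``compact interior analytic set'' has been exhausted: indeed $A$ is the \emph{maximal} compact analytic subset of $X \setminus M$, and $A \Subset V \Subset W$, so $X \setminus \overline{W}$ contains no positive-dimensional compact analytic subset. Moreover $X \setminus \overline{W}$ sits inside the strongly pseudoconvex manifold $X \setminus M$ and is itself an increasing union of strongly pseudoconvex (even with the added collar near $\pa W$) domains exhausting $X \setminus M$, so it has the cohomological properties of a $1$-convex / holomorphically convex situation after the Remmert reduction collapses $A$.

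The key step is a vanishing theorem for $H^1$ with values in the \emph{negative} line bundle $N^*_\mathcal{F}$. Concretely: $N_\mathcal{F}$ carries a smooth Hermitian metric of positive curvature on a neighborhood of $M$ by hypothesis, hence $N^*_\mathcal{F}$ has a metric of negative curvature there; combined with the strong pseudoconvexity of $X \setminus M$ (Theorem \ref{thm:brunella}), one gets enough positivity in the $L^2$ $\opa$-estimates on the strongly pseudoconvex ends to kill $H^1(X \setminus \overline{W}, N^*_\mathcal{F})$, or at least the image of $H^1_c$ — which is what is needed to conclude that the Čech class $BB_\mathcal{F}|_{X \setminus \overline{W}}$, represented by $\{\gamma_{\mu\nu}\} = \{f_{\mu\nu}\omega_\nu\}$, is a coboundary: there exist $\alpha_\mu \in \Omega^1(U_\mu \cap (X \setminus \overline{W}))$, each a section of $N^*_\mathcal{F}$, with $\gamma_{\mu\nu} = \alpha_\mu - \alpha_\nu$. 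Equivalently, one can phrase this via Andreotti--Grauert theory: on the strongly pseudoconvex manifold $X \setminus M$ the cohomology $H^q(\cdot, \mathcal{S})$ for any coherent sheaf $\mathcal{S}$ is finite-dimensional in positive degrees, and after passing to $X \setminus \overline{W}$ — removing the compact core $A$ through which all the obstruction sits — the relevant $H^1$ vanishes. (In fact the paper's own ingredients — the $L^2$ theory on complete Kähler metrics of \S\ref{sect:L2} applied to a complete metric on $X \setminus M$, with the plurisubharmonic exhaustion $\Psi$ and a suitable weight built from $\Psi$ — suffice to produce the needed $\opa$-solution with values in $N^*_\mathcal{F}$.)

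The replacement $\beta_\mu \rightsquigarrow \beta_\mu + \alpha_\mu$ then gives, on $X \setminus \overline{W}$, new $1$-forms $\tilde\beta_\mu$ still satisfying $d\omega_\mu = \tilde\beta_\mu \wedge \omega_\mu$ (because $\alpha_\mu \wedge \omega_\mu = f \,\omega_\mu \wedge \omega_\mu$-type terms vanish, as $\alpha_\mu$ is a multiple of $\omega_\mu$… more carefully, $\alpha_\mu$ being a local section of $N^*_\mathcal{F}$ means $\alpha_\mu = h_\mu \omega_\mu$ so $\alpha_\mu \wedge \omega_\mu = 0$) and with $d g_{\mu\nu}/g_{\mu\nu} - \tilde\beta_\mu + \tilde\beta_\nu = \gamma_{\mu\nu} - (\alpha_\mu - \alpha_\nu) = 0$; i.e. $\{\tilde\beta_\mu\}$ are the connection forms of a holomorphic connection on $N_\mathcal{F}|_{X \setminus \overline{W}}$, which is exactly the assertion $BB_\mathcal{F}|_{X \setminus \overline{W}} = 0$. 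The main obstacle I anticipate is the cohomology-vanishing step: one must be careful that $X \setminus \overline{W}$ is not compact and not a priori Stein, so the vanishing of $H^1(X \setminus \overline{W}, N^*_\mathcal{F})$ genuinely uses both the strong pseudoconvexity of $X \setminus M$ (to control the ``end at $M$'' via $L^2$ estimates with the negative curvature of $N^*_\mathcal{F}$) and the maximality of $A$ (to ensure nothing compact obstructs near $\pa W$); reconciling these — i.e. choosing the weight and the complete Kähler metric so that Hörmander-type estimates close — is where the real work lies.
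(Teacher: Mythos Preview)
Your proposal has a genuine geometric misreading that derails the argument. You write that ``$X \setminus \overline{W}$ sits inside the strongly pseudoconvex manifold $X \setminus M$'' and speak of its ``strongly pseudoconvex ends''. But $W$ is chosen with $\overline{W} \subset X \setminus M$, so in fact $M \subset X \setminus \overline{W}$: the set $X \setminus \overline{W}$ is the \emph{outer} region containing the minimal set $M$, a relatively compact open subset of the compact manifold $X$ whose only boundary component is $\partial W$, and that boundary is strictly pseudo\emph{concave} from the $X \setminus \overline{W}$ side. There is no ``end at $M$'' on which to run $L^2$ estimates, and Andreotti--Grauert finiteness on a pseudoconcave domain gives you nothing for $H^1$. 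Nor is $BB_\mathcal{F}|_{X \setminus \overline{W}}$ in the image of $H^1_c$ in any evident way, so the hedge ``or at least the image of $H^1_c$'' does not rescue the plan.

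The paper's proof takes a completely different route that you do not mention: it does \emph{not} attempt to prove $H^1(X \setminus \overline{W}, N^*_\mathcal{F}) = 0$ directly. Instead it \emph{extends} a Dolbeault representative of $BB_\mathcal{F}$ across $\overline{W}$ to a $\opa$-closed form on all of $X$, and then kills it globally. Concretely: cut off a representative $g$ on $X \setminus \overline{V}$ by $(1-\rho)$ with $\rho$ supported in $W$; then $\opa((1-\rho)g)$ is a $(0,2)$-form compactly supported in $W$. The key vanishing is $H^{0,2}_c(W, N^*_\mathcal{F}) = 0$, obtained by Serre duality from $H^{n,n-2}(W, N_\mathcal{F}) = 0$, which holds by Grauert--Riemenschneider since $W$ is strongly pseudoconvex, $N_\mathcal{F}$ is positive, and crucially $n \geq 3$. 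Solving $\opa u = \opa((1-\rho)g)$ with $u$ compactly supported in $W$ produces a global $\opa$-closed extension $(1-\rho)g - u$ on $X$; then Kodaira vanishing $H^1(X, N^*_\mathcal{F}) = 0$ (using ampleness of $N_\mathcal{F}$ on the compact $X$) finishes the job. Your sketch contains neither the extend-to-$X$ trick nor the compactly supported solvability on $W$, and these are precisely the missing ideas.
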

\begin{proof}
In this proof, we identify the sheaf cohomology groups with the Dolbeault cohomology groups. 
Let $g$ be a smooth $\opa$-closed $(0,1)$-form on $X \setminus \overline{V}$ with values in $N_\mathcal{F}^\ast$ 
that represents $BB_\mathcal{F}|_{X \setminus \overline{V}}$.
We would like to show that $g$ is $\opa$-exact on $X \setminus \overline{W}$.

Take a compactly supported smooth function $\rho \colon W \to [0,1]$ such that $\rho = 1$ on $\ol{V}$.
Then $\tilde g := (1-\rho)g$ is a smooth $(0,1)$-form on $X$ with values in $N_\mathcal{F}^\ast$,
and $\opa \tilde g$ is a $(0,2)$-form,  compactly supported in $W$.
	Since $W$ is strongly pseudoconvex, $\dim X = n\geq 3$ and $N_{\mathcal{F}}$ is positive, $H^{n,n-2}(W, N_{\mathcal{F}})$ vanishes as a consequence of the vanishing theorem by Grauert--Riemenschneider. But then Serre duality implies that $H^{0,2}_{c}(W, N^*_{\mathcal{F}})$ vanishes. 
	Therefore we can solve the equation $\opa u = \opa\tilde g$ with $u$ compactly supported in $W$,
and obtain a smooth $\opa$-closed extension $\tilde g -u$ of $g|_{X \setminus \overline{W}}$ to $X$.

Since $N_{\mathcal{F}}$ is assumed to be positive,  
$H^1(X, N^*_\mathcal{F})$ vanishes thanks to Kodaira's vanishing theorem. 
It follows that $\tilde g -u$ is $\opa$-exact on $X$, and hence, $g$ is $\opa$-exact on $X \setminus \overline{W}$.
	\end{proof}
By taking a refinement of $\{U_\mu\}$ if necessary, we find $\gamma_\mu \in \Omega^1(U_\mu \setminus \overline{W}) $ 
satisfying $\gamma_{\mu\nu} = \gamma_\mu - \gamma_\nu$ in $(U_\mu \cap U_\nu) \setminus \overline{W}$. 
Then $\hat{\beta}_\mu := \beta_\mu + \gamma_\mu$ satisfies the gauge transformation law $\hat{\beta}_\mu = dg_{\mu\nu}/g_{\mu\nu} + \hat{\beta}_\nu$, hence, $\{\hat{\beta}_\mu\}$ defines a holomorphic connection of  $N_\mathcal{F}|_{X \setminus \overline{W}}$.
By shrinking $V$ and $W$, we obtain a holomorphic connection $\nabla_{\rm hol}$ of $N_\mathcal{F}|_{X \setminus A}$.

\subsection{Second step} \label{sect:step2}
In this step, we shall show that the first Atiyah class $a_1(N_{\mathcal F})$ of $N_{\mathcal{F}}$ is represented by a $d$-closed $L^2$ (1,1)-form which is supported in $\ol{W}$ and smooth in $X \setminus \pa W$. 

We take an arbitrary smooth Hermitian metric $h_0$ of $N_\mathcal{F}$ and consider its Chern connection $\nabla_0$. 
Take a compactly supported smooth function $\rho \colon W \to [0,1]$ such that $\rho = 1$ on $\ol{V}$.
Defining $\nabla := \rho \nabla_0 + (1-\rho)\nabla_{\rm hol}$, we get a smooth $(1,0)$-connection of $N_\mathcal{F}$ which is holomorphic in $X \setminus \supp \rho$.
We choose $a_1(\nabla)$ as a representative of $a_1(N_{\mathcal F})$. 
Since $\nabla$ agrees with the holomorphic connection $\nabla_{\rm hol}$ over $X \setminus \supp \rho$,
$a_1(\nabla) = 0$ in $X \setminus \supp \rho$.
Hence, $a_1(\nabla)$ can be seen as a $\opa$-closed smooth $(1,1)$-form compactly supported in $W$.

\bigskip
We are going to find a $d$-closed $(1,1)$-form supported in $\ol{W}$ which is $\opa$-cohomologues to $a_1(\nabla)$
by employing $L^2$ Hodge theory on $W$. 
We equip $W$ with a complete K\"ahler metric $\omega$ of the form
\[
\omega = \omega_0  + i\pa\opa (-\log \delta),
\] 
where $\omega_0$ is a K\"ahler metric on $X$, which exists since $X$ is projective, 
and $-\delta \colon \ol{W} \to [-1, 0]$ is a smooth plurisubharmonic defining function for $W$ which 
is strictly plurisubharmonic on a neighborhood of $\pa W$.

\begin{Lemma} \label{lem:hodge}
	$H^{1,1}_{L^2}(W,\omega)$ and $\mathcal{H}^{1,1}(W,\omega)$ are finite dimensional and isomorphic if $\dim X = n \geq 3$.
	\end{Lemma}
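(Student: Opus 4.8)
The plan is to obtain both the finite-dimensionality and the Hodge isomorphism from general $L^2$ Hodge theory on complete Kähler manifolds (as recalled in \S\ref{sect:L2}), once we know that $W$, equipped with the metric $\omega = \omega_0 + i\pa\opa(-\log\delta)$, is a complete Kähler manifold whose relevant $L^2$ cohomology is finite dimensional. The key point is therefore to recognize $W$ as a manifold on which such finiteness is available. Since $W$ is a relatively compact domain with smooth strictly pseudoconvex boundary in the projective manifold $X$, it is in particular a $1$-convex (strongly pseudoconvex) manifold, and the added term $i\pa\opa(-\log\delta)$ makes $-\log\delta$ a complete Kähler potential near $\pa W$ in the usual way (so that $\omega$ is complete on $W$, with the completeness exhaustion furnished by $-\log\delta$ up to the standard rescaling $\vert d\psi\vert_\omega\leq 1$).

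The first step is to check completeness and the Kähler property of $\omega$: $\omega_0$ is Kähler on $X$, $i\pa\opa(-\log\delta)$ is a closed positive $(1,1)$-form near $\pa W$ (by strict plurisubharmonicity of $-\delta$ there) and is a smooth closed form on the rest of $W$, so $\omega$ is a genuine Kähler metric on $W$; completeness follows because $-\log\delta\to+\infty$ at $\pa W$ and a suitable function of $-\log\delta$ has $\omega$-gradient bounded by $1$ — this is the classical construction going back to Grauert. The second step is the finite-dimensionality of $H^{1,1}_{L^2}(W,\omega)$. Here I would invoke that $W$ is strongly pseudoconvex, so by Andreotti--Grauert/Grauert's coherence its Dolbeault cohomology in positive degree with values in any coherent sheaf is finite dimensional; one then needs the $L^2$ cohomology for this particular complete metric to agree with (a subspace of, or be identified with) the honest finite-dimensional cohomology. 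This identification is exactly the kind of statement proved by Ohsawa in the $L^2$ Hodge theory for $1$-convex (and more generally weakly $1$-complete) manifolds, which is the framework cited in the introduction (\cite{ohsawa-hodge1, ohsawa-hodge2}); alternatively one can argue that the $L^2$ $\opa$-complex on $(W,\omega)$ computes the cohomology of the sheaf of germs of holomorphic forms and appeal to finiteness from strong pseudoconvexity directly.

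Once finite-dimensionality of $H^{1,1}_{L^2}(W,\omega)$ is in hand, the isomorphism $\mathcal{H}^{1,1}(W,\omega)\simeq H^{1,1}_{L^2}(W,\omega)$ is immediate from the general fact recalled in \S\ref{sect:L2}: for a complete Hermitian metric, harmonic forms represent $L^2$-Dolbeault cohomology classes bijectively provided the cohomology group is finite dimensional (the Kähler hypothesis is not even needed for this last implication, only completeness, though we do have it). So the proof reduces to: (i) $\omega$ is complete Kähler on $W$; (ii) $H^{1,1}_{L^2}(W,\omega)$ is finite dimensional.

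The step I expect to be the main obstacle is (ii), the finite-dimensionality of the $L^2$ Dolbeault cohomology for this specific complete metric. Finiteness of the ordinary $\opa$-cohomology on a strongly pseudoconvex $W$ is classical, but passing to the $L^2$ theory for the blown-up metric $\omega = \omega_0 + i\pa\opa(-\log\delta)$ requires controlling how the $L^2$ condition interacts with the degeneration of $\omega$ near $\pa W$; this is precisely where the $L^2$ Hodge theory on complete Kähler manifolds of Ohsawa-type enters, and it is the technical heart of the lemma. I would either cite the relevant statement from \cite{ohsawa-hodge1, ohsawa-hodge2} verbatim, or sketch the Bochner--Kodaira--Nakano/Hörmander estimate on $(W,\omega)$ that yields closed range of $\opa$ in the relevant bidegree — together with the identification of $\mathcal{H}^{1,1}(W,\omega)$ with a cohomology group known to be finite dimensional — and thereby conclude.
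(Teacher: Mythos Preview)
Your outline has the right scaffolding---$(W,\omega)$ is complete K\"ahler, and once $H^{1,1}_{L^2}(W,\omega)$ is known to be finite dimensional, the isomorphism with $\mathcal{H}^{1,1}(W,\omega)$ is indeed automatic from \S\ref{sect:L2}. But the heart of the lemma is precisely your step~(ii), and there your proposal has a genuine gap: you never use the hypothesis $n\geq 3$. That hypothesis is not decorative; it is exactly what makes the $(1,1)$--bidegree estimate go through, and any correct proof must explain where it enters.

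Your two suggested routes for (ii) both stop short. Invoking Andreotti--Grauert gives finiteness of the \emph{ordinary} $H^1(W,\Omega^1)$, but identifying this with the $L^2$ cohomology for the blown-up metric $\omega$ is itself a theorem, not a formality; the $L^2$ condition near $\partial W$ is a genuine boundary condition, and whether $H^{p,q}_{L^2}(W,\omega)$ coincides with ordinary cohomology, with compactly supported cohomology, or with neither, depends on the bidegree relative to $n$. Citing \cite{ohsawa-hodge1,ohsawa-hodge2} ``verbatim'' is not enough unless you isolate the precise statement and verify its hypotheses---and doing so will force you to confront a bidegree condition equivalent to $n\geq 3$. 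Your alternative, ``sketch the Bochner--Kodaira--Nakano estimate,'' is the right instinct, but a sketch that does not display the eigenvalue inequality is not a proof.

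What the paper actually does is give that estimate in a self-contained way. One sets $\varphi=-t\log\delta$ for small $t>0$; since $\omega=\omega_0+\frac{1}{t}i\partial\overline\partial\varphi$, the eigenvalues $\lambda_1\leq\cdots\leq\lambda_n$ of $i\partial\overline\partial\varphi$ with respect to $\omega$ are all close to $t$ outside a compact set. The Bochner--Kodaira--Nakano inequality for $(1,1)$-forms with weight $e^{\varphi}$ then has curvature term $(\lambda_1+\cdots+\lambda_{n-1})-\lambda_n\approx (n-2)t$, which is bounded below by a positive constant precisely when $n\geq 3$. The weighted estimate is converted to an unweighted one by the Berndtsson--Siu substitution $u=ve^{-\varphi/2}$, using the Donnelly--Fefferman bound $|\partial\varphi|^2_\omega\leq t/4$ near $\partial W$. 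This yields $\|v\|^2\lesssim \|\overline\partial v\|^2+\|\overline\partial^{*}v\|^2$ for $v$ supported off a compact set, from which finite dimensionality and the harmonic isomorphism follow by the standard argument. The entire mechanism collapses for $n=2$ because the curvature term vanishes to leading order---that is the content you are missing.
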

	
Although this is a classical result (cf. \cite{ohsawa-hodge1,ohsawa-hodge2,ohsawa-takegoshi,demailly}), we shall give a self-contained proof for the readers' convenience.
\begin{proof}
	The idea is to use the twisting trick of Berndtsson and Siu, making use of the advantage that the function $-\log\delta$ satisfies the Donnelly--Fefferman condition outside a compact of $W$.
	Precisely speaking, since $\omega = \omega_0 +i\pa\opa(-\log\delta) = \omega_0 + i \frac{\pa\opa(-\delta)}{\delta}+ i\frac{\pa\delta}{\delta}\wedge\frac{\opa\delta}{\delta}$, we may choose $0< t <\frac{1}{4}$ and a compact $K\subset W$  such that $\varphi := -t\log\delta$ satisfies
	\begin{equation} \label{DF}
		\vert\pa\varphi\vert_\omega^2\leq \frac{t}{{4}}\quad\text{in $W\setminus K$}.
		\end{equation}
Let $(0\leq) \lambda_1\leq\dots\leq\lambda_{n}$ be the eigenvalues of $i\pa\opa\varphi$ with respect to the metric  $\omega$. Then, since $\omega= \omega_0 +\frac{1}{t}i\pa\opa\varphi$, enlarging $K$ if necessary, we may assume that 
\begin{equation}  \label{ev}
	\lambda_j = t + \varepsilon_j \quad\text{in $W\setminus K$}
	\end{equation}
with $\vert\varepsilon_j\vert \leq \frac{t}{2n}$ for all $1\leq j\leq n$.	

\bigskip
From the Bochner--Kodaira--Nakano inequality (see \cite{demailly-book}) it follows that
	\begin{equation*}
		\Vert\opa u\Vert^2_{-\varphi} + \Vert\opa^\ast_{-\varphi }u\Vert^2_{-\varphi} \geq \int_W \big((\lambda_1 +\ldots +\lambda_{n-1}) -\lambda_n\big)\vert u\vert^2_\omega e^\varphi dV_\omega\end{equation*}
	for $u\in \mathcal{D}^{1,1}(W)$.
	Using (\ref{ev}), we have the estimate
	\begin{equation*}
		\lambda_1 +\ldots +\lambda_{n-1} -\lambda_n\geq t(n-2) -\frac{t}{2} \geq \frac{t}{2}\quad \mathrm{in}\ W\setminus K
	\end{equation*}
since $n \geq 3$.
Hence for every $u\in \mathcal{D}^{1,1}(W\setminus K)$ we have the estimate
\begin{equation} \label{basic}
\frac{t}{2} \Vert u\Vert_{-\varphi}^2\leq \Vert \opa u\Vert^2_{-\varphi} + \Vert \opa^\ast_{-\varphi} u \Vert^2_{-\varphi}.
\end{equation}

\bigskip
Now we substitute $u = v e^{-\varphi/2}$. It is not difficult to see that
$$
\vert \opa u\vert^2_\omega  e^{\varphi} 
 = \vert \opa v - \frac{1}{2} \opa \varphi \wedge v \vert^2_\omega  
 \leq 2 \left( \vert\opa v\vert^2_{\omega} + \frac{1}{4} \vert \opa\varphi\vert^2_{\omega} \vert v\vert^2_{\omega} \right) 
 \leq 2 \vert\opa v\vert^2_{\omega} + \frac{t}{8} \vert v\vert^2_{\omega}.
$$
Since $\opa^\ast_{-\varphi} = e^{-\varphi} \opa^\ast e^{\varphi}$, we likewise get
\begin{align*}
\vert\opa^\ast_{-\varphi} u\vert^2_{\omega} e^{\varphi} &= \vert \opa^\ast (e^{\varphi/2} v) \vert^2_{\omega} e^{-\varphi}
= \vert \opa^\ast v  + \frac{1}{2} (\opa \varphi)^* v \vert^2_\omega  \\
&\leq 2 \left(\vert\opa^\ast v\vert^2_{\omega} + \frac{1}{4} \vert \opa\varphi\vert^2_{\omega}  \vert v\vert^2_{\omega} \right) \leq 2\vert\opa^\ast v\vert^2_{\omega} + \frac{t}{8}\vert v\vert^2_{\omega}.
\end{align*}
Together with (\ref{basic}), these two inequalities imply
\begin{equation}  \label{estspace2}
	 \frac{t}{8} \Vert v\Vert^2 \leq( \Vert\opa v\Vert^2 + \Vert\opa^\ast v\Vert^2)
\end{equation}
for all $v\in\mathcal{D}^{1,1}(W\setminus K)$. 

It is now standard to conclude that for any compact $L$ containing $K$ in its interior, there exists a positive constant $C_L$ such that
$$\Vert v\Vert^2 \leq C_L ( \Vert \opa v\Vert^2 + \Vert\opa^\ast v\Vert^2 + \int_L \vert v\vert^2_{\omega} dV_{\omega})$$
for all $v\in\mathcal{D}^{1,1}(W)$, hence for all $v\in L^2_{1,1}(W,\omega)$ by completeness of the metric $\omega$. But the above estimate implies that $H^{1,1}_{L^2}(W,\omega)$ is finite dimensional and isomorphic to  $\mathcal{H}^{1,1}(W,\omega)$. 
	\end{proof}

Since $a_1(\nabla)$ is a $\opa$-closed $(1,1)$-form compactly supported in $W$, Lemma \ref{lem:hodge} yields 
a smooth $(1,1)$-form $\gamma \in \mathcal{H}^{1,1}(W, \omega)$ and $\beta_1 \in L^2_{1,0}(W, \omega)$ 
such that $a_1(\nabla) - \gamma = \opa \beta_1$ on $W$.

Note that 
\[
dV_{\omega} \sim \omega^n  \sim \frac{1}{\delta^{n+1}}\omega_0^n \sim \delta^{-n-1}dV_{\omega_0}.
\]
Since $\delta^2 \omega \lesssim \omega_0$ on $W$, it holds that $\delta^{-2} |\beta|^2_{\omega} \gtrsim |\beta|^2_{\omega_0}$ for any 1-form $\beta$ on $W$,
and $\delta^{-4}|\alpha|^2_{\omega} \gtrsim  |\alpha|^2_{\omega_0}$ for any 2-form $\alpha$ on $W$.
Hence, 
\begin{align*}
\int_W | \beta_1 |^2_{\omega_0} \delta^{-n+1} dV_{\omega_0}
& \lesssim \int_W | \beta_1 |^2_{\omega} \delta^{-n-1} dV_{\omega_0}
 \sim \int_W | \beta_1 |^2_{\omega} dV_{\omega} < \infty,\\
\int_W | \gamma |^2_{\omega_0} \delta^{-n+3} dV_{\omega_0}
& \lesssim \int_W | \gamma |^2_{\omega} \delta^{-n-1} dV_{\omega_0}
 \sim \int_W | \gamma |^2_{\omega} dV_{\omega} < \infty,
\end{align*}
therefore,
\begin{align*}
\beta_1 &\in L^2_{1,0}(W,  \omega_0, (n-1)\log\delta) \subset L^2_{1,0}(W, \omega_0, \log\delta),\\
\gamma &\in L^2_{1,1}(W, \omega_0, (n-3)\log\delta) \subset L^2_{1,1}(W, \omega_0)
\end{align*}
since $\dim X = n \geq 3$. 

\bigskip
We denote the extensions by zero of $\gamma$ and $\beta_1$ to $X$ by $\tilde{\gamma}$ and $\tilde{\beta}_1$ respectively. 
Then $\tilde{\gamma}$ and $\tilde{\beta_1}$ belong to $L^2_{\bullet,\bullet}(X, \omega_0)$.
Moreover, they satisfy the above $\opa$-equation not only on $W$ but also on $X$:
\begin{Claim}
\label{claim:dbar}
$a_1(\nabla) - \tilde{\gamma} = \opa \tilde{\beta}_1$ on $X$.
\end{Claim}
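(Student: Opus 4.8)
The plan is to read the asserted identity as an equality of currents (equivalently, of $L^2_{\bullet,\bullet}(X,\omega_0)$-forms) on $X$. Outside $\ol{W}$ all three forms vanish, and on $W$ the identity $a_1(\nabla)-\gamma=\opa\beta_1$ is already established; moreover $a_1(\nabla)$ is a smooth $(1,1)$-form compactly supported in $W$, so $a_1(\nabla)-\tilde\gamma$ is exactly the extension by zero of $\opa\beta_1$, which I will denote $\widetilde{\opa\beta_1}$. Since $\tilde\beta_1,\tilde\gamma\in L^2_{\bullet,\bullet}(X,\omega_0)\subset L^1_{\mathrm{loc}}(X)$, it therefore suffices to prove that $\opa\tilde\beta_1=\widetilde{\opa\beta_1}$ in the sense of currents, i.e. that integrating by parts across $\pa W$ produces no boundary term; the finer statement about smoothness off $\pa W$ is then automatic from $a_1(\nabla)-\tilde\gamma$ being smooth there.

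To control the boundary I would introduce cut-offs: fix $\chi\in C^\infty(\R)$ with $\chi\equiv 0$ on $(-\infty,1]$ and $\chi\equiv 1$ on $[2,\infty)$, and set $\chi_\nu:=\chi(\nu\delta)$ on $W$, extended by $0$ outside. Then $0\le\chi_\nu\le 1$, $\chi_\nu\uparrow 1$ pointwise on $W$, each $\chi_\nu$ is smooth with compact support in $W$, $\supp\opa\chi_\nu\subset\{1/\nu\le\delta\le 2/\nu\}$ (these supports shrinking to $\pa W$ as $\nu\to\infty$), and $|\opa\chi_\nu|_{\omega_0}\lesssim\nu$ there because $\delta$ is smooth up to $\pa W$. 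For any test form $\phi$ of bidegree $(n-1,n-1)$ on $X$, the smooth form $\chi_\nu\beta_1\wedge\phi$ has compact support in $W$ and bidegree $(n,n-1)$, so $\int_X\opa(\chi_\nu\beta_1\wedge\phi)=\int_X d(\chi_\nu\beta_1\wedge\phi)=0$ by Stokes. Expanding by the Leibniz rule gives
\[
\int_X\opa\chi_\nu\wedge\beta_1\wedge\phi+\int_X\chi_\nu\,\opa\beta_1\wedge\phi-\int_X\chi_\nu\,\beta_1\wedge\opa\phi=0 .
\]
As $\nu\to\infty$, the second and third integrals converge by dominated convergence to $\int_X\widetilde{\opa\beta_1}\wedge\phi$ and $\int_X\tilde\beta_1\wedge\opa\phi$, respectively, using $\opa\beta_1,\beta_1\in L^1_{\mathrm{loc}}$ and that $\phi$ has compact support.

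The main obstacle — and the only point where the weighted bounds of this step, hence $n\ge 3$, are used — is showing that the cut-off integral $\int_X\opa\chi_\nu\wedge\beta_1\wedge\phi$ tends to $0$. Since $|\phi|_{\omega_0}$ is bounded, Cauchy--Schwarz with the splitting $|\beta_1|_{\omega_0}=\bigl(|\beta_1|_{\omega_0}\,\delta^{-(n-1)/2}\bigr)\,\delta^{(n-1)/2}$ gives
\[
\Bigl|\int_X\opa\chi_\nu\wedge\beta_1\wedge\phi\Bigr|\lesssim\Bigl(\int_{\supp\opa\chi_\nu}|\opa\chi_\nu|_{\omega_0}^2\,\delta^{n-1}\,dV_{\omega_0}\Bigr)^{1/2}\Bigl(\int_{\supp\opa\chi_\nu}|\beta_1|_{\omega_0}^2\,\delta^{-(n-1)}\,dV_{\omega_0}\Bigr)^{1/2}.
\]
On $\{1/\nu\le\delta\le 2/\nu\}$ one has $|\opa\chi_\nu|_{\omega_0}^2\lesssim\nu^2$, $\delta^{n-1}\lesssim\nu^{-(n-1)}$, and the $dV_{\omega_0}$-volume of this collar is $\lesssim\nu^{-1}$, so the first factor is $\lesssim\nu^{(2-n)/2}$, which is bounded for $n\ge 2$; the second factor tends to $0$ because $\beta_1\in L^2_{1,0}(W,\omega_0,(n-1)\log\delta)$ and $\supp\opa\chi_\nu$ shrinks to the $dV_{\omega_0}$-null set $\pa W$, so absolute continuity of the integral forces the product to vanish in the limit. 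Passing to the limit in the Leibniz identity then yields $\int_X\tilde\beta_1\wedge\opa\phi=\int_X\widetilde{\opa\beta_1}\wedge\phi=\int_X\bigl(a_1(\nabla)-\tilde\gamma\bigr)\wedge\phi$ for every test form $\phi$, which is precisely $\opa\tilde\beta_1=a_1(\nabla)-\tilde\gamma$ on $X$.
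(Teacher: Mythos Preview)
Your argument is correct and follows essentially the same route as the paper: cut off near $\pa W$ by $\chi(\nu\delta)$, integrate by parts on $W$, let the non-boundary terms converge by dominated convergence, and kill the boundary term by a weighted Cauchy--Schwarz estimate together with absolute continuity of the integral. The only cosmetic difference is that you split with the weight $\delta^{-(n-1)}$ (using the full integrability $\beta_1\in L^2_{1,0}(W,\omega_0,(n-1)\log\delta)$), whereas the paper splits with $\delta^{-1}$ (using only the weaker inclusion $\beta_1\in L^2_{1,0}(W,\omega_0,\log\delta)$); either choice makes the first factor bounded and the second tend to zero.
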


\begin{proof} Setting $\alpha_1 = a_1(\nabla) - \tilde{\gamma}$, we want to show that $\opa\tilde{\beta}_1 = \alpha_1$ in the sense of distributions on $X$. 
	That is, we have to show that
	\begin{equation}  \label{toshow}
		\int_{W} \beta_1\wedge \opa g = \int_{W} \alpha_1\wedge g.
	\end{equation}
	for $g\in \mathcal{C}^\infty_{n-1,n-1}(X)$
	
		Let $\chi\in\mathcal{C}^\infty(\R,\R)$ be a function such that $\chi(t)=0$ for $t \leq \frac{1}{2}$ and $\chi(t)=1$ for $t \geq 1$. Set $\chi_j = \chi(j\delta)\in\mathcal{D}(W)$. 
	Then $\chi_j g \in\mathcal{D}^{n-1,n-1}(W)$, and since $\opa \beta_1 = \alpha_1$ in $W$, we therefore have
	$$\int_{W} \alpha_1\wedge \chi_j g = \int_{W} \beta_1\wedge\opa (\chi_j g) = \int_{W} \beta_1\wedge(\opa \chi_j \wedge g + \chi_j\opa g).$$
	As $\alpha_1$ has $L^2$ coefficients on $W$, the integral of $\alpha_1\wedge\chi_j g$   converges  to the integral of $\alpha_1\wedge g$  as $j$ tends to infinity.  Since $\beta_1\in L^1_{1,0}(X)$, the integral of $\beta_1\wedge\chi_j\opa g$ converges to the integral of $\beta_1\wedge\opa g$. 
	The remaining term can be estimated as follows: using the Cauchy--Schwarz inequality we have
	\begin{equation}  \label{CauchySchwarz}
		\left\vert\int_{W} \beta_1\wedge \opa\chi_j \wedge g \right\vert^2 \leq
		\sup_{W}\vert g\vert^2_{\omega_0} \int_{\lbrace 0<\delta \leq \frac{1}{j}\rbrace} \vert \beta_1\vert^2_{\omega_0}\delta^{-1}dV_{\omega_0} \cdot \int_{W}\vert\opa\chi_j\vert^2_{\omega_0}\delta dV_{\omega_0}.
	\end{equation}
	
	Since $\beta_1\in L^2_{1,0}(W,\omega_0,\log\delta)$, the integral $\int_{\lbrace 0 <\delta \leq \frac{1}{j}\rbrace} \vert \beta_1\vert^2_{\omega_0}\delta^{-1}dV_{\omega_0}$ converges to $0$ when $j$ tends to infinity. 
	We estimate the second integral as follows
	\begin{equation*}
		\int_{W} \vert \opa\chi_j\vert^2_{\omega_0} \delta dV_{\omega_0} 
		 \lesssim  \int_{\lbrace 0<\delta \leq \frac{1}{j}\rbrace} j^2 \delta dV_{\omega_0} \\
		 \lesssim  j \mathrm{Vol}_{\omega_0}(\lbrace 0 <\delta\leq \frac{1}{j}\rbrace) \leq\ \mathrm{cte}.
	\end{equation*}

	Combining this estimate with (\ref{CauchySchwarz}), we have proved that $\int_{W} \beta_1\wedge\opa\chi_j \wedge g$ converges to $0$ when $j$ tends to infinity. Equation (\ref{toshow}) follows.
	
	\end{proof}

Since $\omega$ is K\"ahler, the harmonic form $\gamma$ is $d$-closed on $W$ (see §\ref{sect:L2}). 
The extension by zero, $\tilde{\gamma}$, is also $d$-closed on $X$ and gives the desired representative of $a_1(N_{\mathcal{F}})$ over $X$. 
\begin{Claim} \label{claim:closedness}
The $(1,1)$-form $\tilde{\gamma}$ is $d$-closed on $X$.
\end{Claim}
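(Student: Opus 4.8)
The statement to establish is that the zero extension $\tilde\gamma$ is $d$-closed on all of $X$. The plan is to reduce this to the already-proved facts that (i) $\gamma$ is $d$-closed on $W$ (being $\opa$-harmonic for a complete Kähler metric, hence $d^\ast$- and $d$-closed, as recorded in \S\ref{sect:L2}), and (ii) $a_1(\nabla)-\tilde\gamma=\opa\tilde\beta_1$ holds on $X$ in the distribution sense by Claim \ref{claim:dbar}. The key observation is that $d\tilde\gamma$ is a priori a current supported on $\pa W$, and one must show it vanishes there. I would first split $d\tilde\gamma=\pa\tilde\gamma+\opa\tilde\gamma$ and treat the two bidegrees separately.

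For the $(1,2)$-part: from Claim \ref{claim:dbar} we have $\opa\tilde\gamma=\opa a_1(\nabla)-\opa\opa\tilde\beta_1=0$ on $X$ as currents, since $a_1(\nabla)$ is $\opa$-closed and $\opa^2=0$; this disposes of $\opa\tilde\gamma$ with no boundary analysis at all. For the $(2,1)$-part $\pa\tilde\gamma$, the plan is to run an integration-by-parts/cutoff argument parallel to the proof of Claim \ref{claim:dbar}. Namely, for a test form $g\in\mathcal C^\infty_{n-2,n-1}(X)$ I would compute $\int_W\tilde\gamma\wedge\pa g$ against the cutoffs $\chi_j=\chi(j\delta)$: since $\pa\gamma=0$ on $W$, $\int_W \chi_j\,\pa\gamma\wedge g=0$, and expanding $\pa(\chi_j\gamma\wedge g)$ one is left with controlling $\int_W \pa\chi_j\wedge\gamma\wedge g$. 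Here I would Cauchy--Schwarz exactly as in \eqref{CauchySchwarz}, pairing $\int_{\{0<\delta\le 1/j\}}|\gamma|^2_{\omega_0}\delta\,dV_{\omega_0}$ — which tends to $0$ because $\gamma\in L^2_{1,1}(W,\omega_0,(n-3)\log\delta)\subset L^2_{1,1}(W,\omega_0)$ and in fact has better-than-$L^2$ weighted integrability when $n\ge 3$ — against $\int_W |\pa\chi_j|^2_{\omega_0}\delta^{-1}dV_{\omega_0}\lesssim j^3\mathrm{Vol}_{\omega_0}(\{0<\delta\le 1/j\})\lesssim j^2$, and check that the product still goes to $0$; this is where the hypothesis $n\ge 3$ (so that $\gamma$ has the extra weight $\delta^{n-3}$, giving room to absorb the $j^2$) is used. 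Hence $\pa\tilde\gamma=0$ as a current on $X$.

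The main obstacle I anticipate is the bookkeeping of weights in this last estimate: one must verify that the product of $\int_{\{0<\delta\le 1/j\}}|\gamma|^2_{\omega_0}\delta^{?}$ and the norm of $\pa\chi_j$ genuinely vanishes, which requires using the \emph{strict} improvement $\gamma\in L^2_{1,1}(W,\omega_0,(n-3)\log\delta)$ rather than merely $\gamma\in L^2_{1,1}(W,\omega_0)$; the bare $L^2$ bound would only give boundedness, not decay. One must also make sure $\gamma\in L^1$ locally near $\pa W$ so that the pairing $\int_W\tilde\gamma\wedge\pa g$ is well defined, which again follows from the weighted estimate above. Once both bidegree pieces are shown to vanish, $d\tilde\gamma=0$ on $X$ follows, and combined with Claim \ref{claim:dbar} this exhibits $\tilde\gamma$ as a $d$-closed $L^2$ $(1,1)$-form representing $a_1(N_{\mathcal F})$ and supported in $\ol W$, smooth on $X\setminus\pa W$, as claimed.
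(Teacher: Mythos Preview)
Your treatment of $\opa\tilde\gamma=0$ via Claim \ref{claim:dbar} is correct and matches the paper. The overall cutoff strategy for $\pa\tilde\gamma=0$ is also the right one. The gap is in the weight bookkeeping at $n=3$, and the paper itself flags this as the delicate point.

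You invoke $\gamma\in L^2_{1,1}(W,\omega_0,(n-3)\log\delta)$ to get ``better-than-$L^2$'' decay and absorb the growth of the cutoff term. But for $n=3$ this weight is $\delta^0=1$: you only know $\gamma\in L^2_{1,1}(W,\omega_0)$, with no extra room at all. Concretely, with your weights the Cauchy--Schwarz product is controlled by
\[
\Bigl(\int_{\{0<\delta\le 1/j\}}|\gamma|^2_{\omega_0}\,\delta\,dV_{\omega_0}\Bigr)\cdot j^2
\ \le\ \frac{1}{j}\Bigl(\int_{\{0<\delta\le 1/j\}}|\gamma|^2_{\omega_0}\,dV_{\omega_0}\Bigr)\cdot j^2
\ =\ j\cdot o(1),
\]
which need not tend to $0$. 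Even if you switch to the paper's weight pairing ($\delta^{-1}$ on $\gamma$, $\delta$ on $\pa\chi_j$, so the cutoff factor is bounded), you would need $\gamma\in L^2_{1,1}(W,\omega_0,\log\delta)$, which you do not have for $n=3$. So your argument establishes the claim only for $n\ge 4$; the paper's Remark immediately after the proof says exactly this.

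The missing idea for $n=3$ is a normal/tangential decomposition of $\gamma$ near $\pa W$. Write $\gamma=\gamma_1+\gamma_2$ with $\gamma_2$ a smooth multiple of $\pa\delta\wedge\opa\delta$ and $\gamma_1\perp\pa\delta\wedge\opa\delta$ with respect to $\omega_0$. Since $\pa\chi_j=j\chi'(j\delta)\,\pa\delta$, the wedge $\gamma_2\wedge\pa\chi_j$ vanishes identically, so only $\gamma_1$ enters the boundary term. The point is that $\gamma_1$, having no $\pa\delta\wedge\opa\delta$ component, has genuinely better integrability: comparing $\omega$ with the rescaled metric $\omega'=\delta^{-1}\omega_{\rm t}+\delta^{-2}\omega_{\rm n}$ one finds $|\gamma_1|^2_{\omega'}\ge\delta^3|\gamma_1|^2_{\omega_0}$ (at most one normal index), whence $\gamma_1\in L^2_{1,1}(W\setminus K,\omega_0,\log\delta)$ already for $n=3$. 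With this in hand the Cauchy--Schwarz argument of Claim \ref{claim:dbar} goes through verbatim with $\gamma_1$ in place of $\beta_1$.
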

\begin{proof}
Since $a_1(\nabla) - \tilde{\gamma} = \opa \tilde{\beta}_1$ holds on $X$, $\tilde{\gamma}$ is $\opa$-closed on $X$.
We show that $\pa \tilde{\gamma} = 0$ in the sense of distributions on $X$.
	That is, we have to show that
	\begin{equation}  
		\int_{W} \tilde{\gamma} \wedge \pa g = 0
	\end{equation}
	for $g\in \mathcal{C}^\infty_{n-2,n-1}(X)$

Take $\chi$ and $\chi_j$ as in the proof of Claim \ref{claim:dbar}. Then, 
\[
\int_{W} \tilde{\gamma} \wedge \pa g = 
\lim_{j \to \infty} \int_{W} \chi_j \gamma \wedge \pa g =
\lim_{j \to \infty} \int_{W}  \gamma \wedge \pa (\chi_j g) - \lim_{j \to \infty} \int_{W}  \gamma \wedge \pa \chi_j\wedge g.
\]
Since $\pa\gamma =0$ in $W$ this implies
\begin{equation} \label{dclosed}
\int_{W} \tilde{\gamma} \wedge \pa g = 
-\lim_{j \to \infty} \int_{W}  \gamma \wedge \pa \chi_j\wedge g  .
\end{equation}

\bigskip
To compute (\ref{dclosed}), we need to decompose $\gamma$.
Since $\pa \delta \wedge \opa \delta$ does not vanish on $W \setminus K$ for some compact $K$,
we can decompose the smooth $(1,1)$-form $\gamma$ as 
\[
\gamma = \gamma_1  + \gamma_2 \quad\text{over $W \setminus K$},
\]
where $\gamma_2$ is a smooth multiple of $\pa\delta\wedge\opa\delta$ and 
$\gamma_1$ is orthogonal to $\pa\delta\wedge\opa\delta$ with respect to the metric $\omega_0$.

We would like to show that $\gamma_1$ has better integrability than $\gamma$ with respect to the metric $\omega_0$. 
For this, let us use an orthogonal decomposition of the metric $\omega_0$ in its tangential and normal parts:
\[
\omega_0 = \omega_{\rm t} + \omega_{\rm n}\quad\text{over $W \setminus K$},
\]
where $\omega_{\rm t}$ corresponds to a smooth Hermitian metric of the subbundle $\Ker \pa\delta \subset T^{1,0}_{W \setminus K}$, and $\omega_{\rm n}$ is a smooth positive multiple of $i\pa\delta\wedge\opa\delta$.
Using this decomposition, we rescale $\omega_0$ to another smooth Hermitian metric $\omega^\prime$ on $W \setminus K$
given by 
\[
\omega^\prime := \frac{1}{\delta}\omega_{\rm t} + \frac{1}{\delta^2}\omega_{\rm n}
=  \frac{1}{\delta}\omega_0 + \frac{1-\delta}{\delta^2} \omega_{\rm n}.
\]
Notice that $\gamma_1$ and $\gamma_2 $ are still orthogonal with respect to $\omega^\prime$.
In particular, it holds that $ |\gamma_1 |^2_{\omega'} \leq |\gamma|^2_{\omega'}$. 
Also, we have $\omega \sim \omega^\prime$ on $W \setminus K$, by enlarging $K$ if necessary, since 
\[
\omega \sim i\pa\opa(-\log\delta) = i \frac{\pa\opa(-\delta)}{\delta}+ i\frac{\pa\delta \wedge \opa\delta}{\delta^2}
\sim  \frac{1}{\delta}\omega_0 + \frac{1}{\delta^2} i\pa\delta \wedge \opa\delta.
\]
We thus have $\gamma_1 \in L^2_{1,1}(W \setminus K,\omega^\prime)$.

It follows from the definitions of $\gamma_1$ and $\omega'$ that  $\vert \gamma_1\vert^2_{\omega^\prime}\geq \delta^3 \vert\gamma_1\vert^2_{\omega_0}$.
Then, 
\[
\int_{W \setminus K} | \gamma_1 |^2_{\omega_0} \delta^{-1} dV_{\omega_0}
\leq \int_{W \setminus K} | \gamma_1 |^2_{\omega'} \delta^{-4} dV_{\omega_0}
\lesssim \int_{W \setminus K} | \gamma_1 |^2_{\omega'} dV_{\omega'} < \infty
\]
since $n \geq 3$ and $dV_{\omega^\prime} \sim dV_{\omega} \sim \delta^{-(n+1)}dV_{\omega_0}$. 
Therefore, we have $\gamma_1\in L^2_{1,1}(W \setminus K,\omega_0,\log\delta)$.

\bigskip
We continue with the computation of (\ref{dclosed}).
Since $\pa\chi_j = j\chi^\prime(j\delta)\pa\delta$, we get
\[
 \int_{W}  \gamma \wedge \pa \chi_j\wedge g  =  \int_{W}  \gamma_1 \wedge \pa \chi_j\wedge g 
\]
for $j$ enough large, and, as in the proof of Claim \ref{claim:dbar},
 
 \begin{align*}
 \left\vert \int_{W}  \gamma_1 \wedge \pa \chi_j\wedge g \right\vert \leq \sup_{W}\vert g\vert^2_{\omega_0} \int_{\lbrace 0<\delta \leq \frac{1}{j}\rbrace} \vert \gamma_1 \vert^2_{\omega_0}\delta^{-1}dV_{\omega_0} \cdot \int_{W}\vert\opa\chi_j\vert^2_{\omega_0}\delta dV_{\omega_0}\\
  \lesssim \int_{\lbrace 0<\delta \leq \frac{1}{j}\rbrace} \vert \gamma_1 \vert^2_{\omega_0}\delta^{-1}dV_{\omega_0} \underset{j\rightarrow\infty}{\longrightarrow} 0.
 \end{align*}
\end{proof}

\begin{Remark}
When $\dim X = n \geq 4$, Claim \ref{claim:closedness} follows from the integrability 
$\gamma \in L^2_{1,1}(W, \omega_0, (n-3)\log\delta) \subset L^2_{1,1}(W, \omega_0, \log\delta)$
in the same way as in Claim \ref{claim:dbar}.
The delicate use of the orthogonal decomposition $\gamma = \gamma_1 + \gamma_2$ is required for $n  = 3$.
\end{Remark}

\subsection{Third step} \label{sect:step3}
In this step, we construct a flat Hermitian metric of $N_{\mathcal F}$ over $M$, and deduce a contradiction.

Since both $a_1(\nabla_0)$ and $a_1(\nabla)$ represent $a_1(N_{\mathcal F})$, 
there exists a smooth $(1,0)$-form $\beta_2$ on $X$ such that 
\[
\opa\beta_2 = a_1(\nabla_0) -a_1(\nabla) = a_1(\nabla_0) - (\tilde{\gamma} + \opa \tilde{\beta}_1).
\]
Notice that $a_1(\nabla_0)$ is exactly the first Chern form of $\nabla_0$, a $d$-closed smooth $(1,1)$-form. 
Therefore, 
\[
a_1(\nabla_0) - \tilde{\gamma} = \opa (\tilde{\beta}_1 +  \beta_2).
\]
is a $d$-closed, $\opa$-exact $L^2$ $(1,1)$-form which is smooth on $X \setminus \pa W$.

Since $X$ is projective, hence, K\"ahler, we may apply the $\partial\overline{\partial}$-lemma 
to obtain an $L^2$ function $\psi \colon X \to \C$ smooth on $X \setminus \partial W$ such that 
\[
a_1(\nabla_0) - \tilde{\gamma} = i\partial\overline{\partial} \psi.
\]
Since $\tilde{\gamma}$ has its support in $\ol{W}$, it holds on $X \setminus \ol{W}$ that 
\[
\frac{i}{2\pi} \opa \pa \log h_0 = i\partial\overline{\partial} \psi,
\]
which implies
\[
\frac{i}{2\pi} \opa \pa \log h_0 = i\partial\overline{\partial} \Re \psi
\]
on $X \setminus \ol{W}$.
Hence, $h_1 := h_0 e^{2\pi \Re\psi}$ gives a smooth flat Hermitian metric of $N_{\mathcal{F}}$ over $X \setminus  \overline{W}$.

Since we assume that $N_{\mathcal{F}}$ is ample, there exists a smooth Hermitian metric $h_2$ of $N_\mathcal{F}$ with positive curvature. 
Then, $\phi := -\log h_2/h_1 \colon X \setminus \overline{W} \to \R$ is a smooth strictly plurisubharmonic function.
The existence of a compact $\mathcal{F}$-invariant subset $M \subset X \setminus \overline{W}$ violates the maximum principle:
There is a point $p \in M$ where $\phi|_M$ takes its maximum since $M$ is compact.
Denote by $\mathcal{L}_p$ the leaf passing through $p$. 
Then, $\phi|_{\mathcal{L}_p}$, a smooth plurisubharmonic function on $\mathcal{L}_p$,
takes its maximum at $p$. 
Hence $\phi|_{\mathcal{L}_p}$ must be constant, but then cannot be a strictly plurisubharmonic function. 
This is a contradiction and completes the proof of the Main Theorem.
\end{proof}

\begin{bibdiv}
\begin{biblist}
\bib{abate}{article}{
   author={Abate, Marco},
   author={Bracci, Filippo},
   author={Suwa, Tatsuo},
   author={Tovena, Francesca},
   title={Localization of Atiyah classes},
   journal={Rev. Mat. Iberoam.},
   volume={29},
   date={2013},
   number={2},
   pages={547--578},
   }
\bib{adachi-biard}{article}{
   author={Adachi, Masanori},
   author={Biard, S\'everine},
   title={On Levi flat hypersurfaces with transversely affine foliation},
   status={Preprint}
}
%\bib{baum-bott}{article}{
%   author={Baum, Paul},
%   author={Bott, Raoul},
%   title={Singularities of holomorphic foliations},
%   journal={J. Differential Geometry},
%   volume={7},
%   date={1972},
%   pages={279--342},
%}
\bib{biard-iordan}{article}{
   author={Biard, S\'{e}verine},
   author={Iordan, Andrei},
   title={Nonexistence of Levi flat hypersurfaces with positive normal
   bundle in compact K\"{a}hler manifolds of dimension $\geq3$},
   journal={Internat. J. Math.},
   volume={31},
   date={2020},
   number={1},
   pages={2050004, 14 pp.},
}

\bib{brinkschulte}{article}{
   author={Brinkschulte, Judith},
   title={On the normal bundle of Levi-flat real hypersurfaces},
   journal={Math. Ann.},
   volume={375},
   date={2019},
   number={1-2}
   pages={343--359},
}
\bib{brunella1}{article}{
   author={Brunella, Marco},
   title={On the dynamics of codimension one holomorphic foliations with
   ample normal bundle},
   journal={Indiana Univ. Math. J.},
   volume={57},
   date={2008},
   number={7},
   pages={3101--3113},
}
\bib{brunella2}{article}{
   author={Brunella, Marco},
   title={Codimension one foliations on complex tori},
   journal={Ann. Fac. Sci. Toulouse Math. (6)},
   volume={19},
   date={2010},
   number={2},
   pages={405--418},
}
\bib{brunella-perrone}{article}{
   author={Brunella, Marco},
   author={Perrone, Carlo},
   title={Exceptional singularities of codimension one holomorphic foliations},
   journal={Publ. Mat.},
   volume={55},
   date={2011},
   number={2},
   pages={295--312},
}
\bib{canales}{article}{
   author={Canales Gonz\'{a}lez, Carolina},
   title={Levi-flat hypersurfaces and their complement in complex surfaces},
   journal={Ann. Inst. Fourier (Grenoble)},
   volume={67},
   date={2017},
   number={6},
   pages={2323--2462},
}
\bib{correa}{article}{
   author={Corr\^{e}a, Maur\'{\i}cio, Jr.},
   author={Fern\'{a}ndez-P\'{e}rez, Arturo},
   title={Absolutely $k$-convex domains and holomorphic foliations on
   homogeneous manifolds},
   journal={J. Math. Soc. Japan},
   volume={69},
   date={2017},
   number={3},
   pages={1235--1246},
}
\bib{demailly}{article}{
   author={Demailly, Jean-Pierre},
   title={Cohomology of $q$-convex spaces in top degrees},
   journal={Math. Z.},
   volume={204},
   date={1990},
   number={2},
   pages={283--295},
}
\bib{demailly-book}{book}{
  author={Demailly, Jean-Pierre},
  title={Complex Analytic and Differential Geometry},
  status={a book available at the author's webpage}
}

\bib{Demailly-Hodge}{incollection}{
author={Demailly, Jean-Pierre},
title={$L^2$ Hodge theory and vanishing theorems},
booktitle={in: Introduction to Hodge Theory, SMF/AMS Texts and Monographs 8},
year={2002},
}

\bib{linsneto}{article}{
   author={Lins Neto, Alcides},
   title={A note on projective Levi flats and minimal sets of algebraic
   foliations},
   journal={Ann. Inst. Fourier (Grenoble)},
   volume={49},
   date={1999},
   number={4},
   pages={1369--1385},
}
\bib{malgrange}{article}{
   author={Malgrange, B.},
   title={Frobenius avec singularit\'{e}s. I. Codimension un},
   journal={Inst. Hautes \'{E}tudes Sci. Publ. Math.},
   number={46},
   date={1976},
   pages={163--173},
}
\bib{ohsawa-hodge1}{article}{
   author={Ohsawa, Takeo},
   title={A reduction theorem for cohomology groups of very strongly
   $q$-convex K\"{a}hler manifolds},
   journal={Invent. Math.},
   volume={63},
   date={1981},
   number={2},
   pages={335--354},
}
\bib{ohsawa-hodge2}{article}{
   author={Ohsawa, Takeo},
   title={Addendum to: ``A reduction theorem for cohomology groups of very
   strongly $q$-convex K\"{a}hler manifolds''},
   journal={Invent. Math.},
   volume={66},
   date={1982},
   number={3},
   pages={391--393},
}
\bib{ohsawa-leviflat1}{article}{
   author={Ohsawa, Takeo},
   title={On the complement of Levi-flats in K\"{a}hler manifolds of dimension
   $\geq3$},
   journal={Nagoya Math. J.},
   volume={185},
   date={2007},
   pages={161--169},
}
\bib{ohsawa-reduction}{article}{
   author={Ohsawa, Takeo},
   title={A reduction theorem for stable sets of holomorphic foliations on
   complex tori},
   journal={Nagoya Math. J.},
   volume={195},
   date={2009},
   pages={41--56},
}
\bib{ohsawa-leviflat2}{article}{
   author={Ohsawa, Takeo},
   title={Nonexistence of certain Levi flat hypersurfaces in K\"{a}hler
   manifolds from the viewpoint of positive normal bundles},
   journal={Publ. Res. Inst. Math. Sci.},
   volume={49},
   date={2013},
   number={2},
   pages={229--239},
}
\bib{ohsawa-takegoshi}{article}{
   author={Ohsawa, Takeo},
   author={Takegoshi, Kensh\={o}},
   title={Hodge spectral sequence on pseudoconvex domains},
   journal={Math. Z.},
   volume={197},
   date={1988},
   number={1},
   pages={1--12},
}
\bib{siu}{article}{
author={Siu, Yum-Tong},
title={Nonexistence of smooth Levi-flat hypersurfaces in complex projective spaces of dimension $\geq 3$},
journal={Ann. of Math. (2)},
volume={151},
date={2000},
number={3},
pages={1217--1243},
}

\bib{ueda}{article}{
   author={Ueda, Tetsuo},
   title={On the neighborhood of a compact complex curve with topologically
   trivial normal bundle},
   journal={J. Math. Kyoto Univ.},
   volume={22},
   date={1982/83},
   number={4},
   pages={583--607},
}
\end{biblist}
\end{bibdiv}
\end{document}